\newcommand\E{\mathcal{E}}
\newcommand{\dt}{\Delta t}
\newcommand\Z{\mathbb{Z}}
\newcommand\dx{\Delta x}
\newcommand\pd[2]{\frac{\partial #1}{\partial #2}}
\newcommand\BigO{\mathcal{O}}
\newcommand\R{\mathbb{R}}
\newcommand{\Tm}{\mathcal{T}}
\newcommand\de{\delta}
\newcommand\eps{\epsilon}
\renewcommand\L{\mathcal{L}}
\newcommand\h{\bar{h}}
\newcommand\x{{\bf x}}
\newtheorem{rmk}{Remark}
\newtheorem{thm}{Theorem}
\newtheorem{defn}{Definition}
\journalname{Journal of Scientific Computing}
\begin{document}

\title{
High-order multiderivative time integrators for hyperbolic conservation laws
}

\author{David C. Seal \and
        Yaman Güçlü   \and \\
        Andrew J. Christlieb}
\institute{
    David C. Seal \at
    Department of Mathematics \\
    Michigan State University \\
    619 Red Cedar Road \\
    East Lansing, MI 48824, USA \\
    Tel.: +1(517) 884-1456 \\
    Fax.: +1(517) 432-1562 \\
    \email{seal@math.msu.edu} 
\and
    Yaman Güçlü \at
    Department of Mathematics \\
    Michigan State University \\
    East Lansing, MI 48824, USA
\and
    Andrew J. Christlieb \at
    Department of Mathematics and
    Department of Electrical and Computer Engineering \\
    Michigan State University \\
    East Lansing, MI 48824, USA \\
}

\date{Received: date / Accepted: date}

\maketitle

\begin{abstract}
Multiderivative time integrators have a long history of development for 
ordinary differential equations, and yet to date, only a small subset of 
these methods have been 
explored as a tool for solving 
partial differential 
equations (PDEs).
This large class of time integrators include all popular (multistage) 
Runge-Kutta as well as single-step (multiderivative) Taylor methods.
(The latter are commonly referred to as Lax-Wendroff methods when applied to 
PDEs.)
In this work, we offer explicit multistage multiderivative time integrators 
for hyperbolic conservation laws. 
Like Lax-Wendroff methods, multiderivative integrators permit the 
evaluation of higher derivatives of the unknown in order to decrease the 
memory footprint and communication overhead. 
Like traditional Runge-Kutta methods, multiderivative integrators admit 
the addition of extra stages, 
which introduce extra degrees of freedom that can be 
used to increase the order of accuracy or modify the region of absolute 
stability.
We describe a general framework for how these methods can be applied to two 
separate spatial discretizations: the discontinuous Galerkin (DG) method and 
the finite difference essentially non-oscillatory (FD-WENO) method.
The two proposed implementations are substantially different: for DG we 
leverage techniques that are closely related to generalized Riemann solvers; 
for FD-WENO we construct higher spatial derivatives with
central differences.
Among multiderivative time integrators, we argue that multistage 
two-derivative methods have the greatest potential for multidimensional 
applications, because they only require 
the flux function and its Jacobian, which is readily available.
Numerical results
indicate that multiderivative methods are indeed competitive with popular strong stability
preserving time integrators.

\keywords{ 
        Hyperbolic conservation laws \and
        Multiderivative Runge-Kutta  \and
        Discontinuous Galerkin       \and
        Weighted essentially non-oscillatory \and
        Lax-Wendroff                 \and 
        Taylor
}

\end{abstract}




\section{Introduction}
\label{sec:introduction}

In this work we revisit classical ordinary differential equation (ODE) solvers
known as multiderivative (Obreshkoff \cite{Obreschkoff40}) methods.  
It will be shown that this large class of time integrators
include all explicit Runge-Kutta and Taylor methods\footnote{When applied to
partial differential equations, Taylor methods are commonly referred to as 
Lax-Wendroff methods.} as special cases.
In particular, 
we demonstrate how a multiderivative ODE method can be used to solve
hyperbolic conservation laws.
We begin by presenting the definitions and notation used throughout 
this work.

A \emph{conservation law} is a partial differential equation (PDE)
defined by a flux function $R$ of the form
\begin{equation}\label{eqn:hyperbolic-law}
   q_{,t} + \nabla_\x \cdot R(q) = 0, \quad q(0,{\bf x}) = q_0( \x ), \quad \x \in \Omega \subseteq \R^d,
\end{equation}
where the solution $q(t, {\bf x}): \R^+ \times \R^d \to \R^m$ is a
vector of $m$ conserved quantities.
In dimension $d$, this initial value problem is defined by $m$ equations
with prescribed initial conditions $q_0 : \R^d \to \R^m$.
We denote the \emph{flux function} $R: \R^m \to \R^d \times \R^m$
with $R = [f^{(1)}, f^{(2)}, \dots , f^{(d)}]^T$.
We say \eqref{eqn:hyperbolic-law} is \emph{hyperbolic} if the matrix
\begin{equation}
   {\bf n}^{(1)} \pd{f^{(1)}}{q}( q ) + {\bf n}^{(2)}  \pd{f^{(2)}}{q}( q ) + \cdots {\bf n}^{(d)}  \pd{f^{(d)}}{q}( q )
\end{equation}
is diagonalizable for every  ${\bf n} \in \R^d$ satisfying
$\| {\bf n} \| = 1$ and $q$ in the domain of interest.

Numerical methods for solving \eqref{eqn:hyperbolic-law} require a discretization 
technique for space as well as a (possibly coupled) 
discretization technique for time.
The vast majority of time stepping discretizations fall into one of
two distinct categories:
%
\begin{itemize}
   \item {\bf Method of lines formulation}.

A method of lines (MOL) solver for  \eqref{eqn:hyperbolic-law}
separates the spatial discretization from the time evolution.
Starting with $q_{,t} = - \nabla_{\x} \cdot R(q)$,
one defines a spatial discretization
$q^h$ of the continuous 
variable $q$, which could be tracking point values (finite difference, spectral
methods), cell averages (finite volume methods), 
or coefficients of basis functions (finite element methods).
This operation defines a function
$\L(q^h) = - \nabla_{\x} \cdot R(q^h)$,
that in turn defines a large ODE system of the form: 
\begin{align}
   q^h_{,t} = \L(q^h).
\end{align}
Once this discretization has been parsed,
one may apply an appropriate ODE integrator to this problem:
for hyperbolic conservation laws, 
explicit time-stepping methods are usually preferred.

\item {\bf Lax-Wendroff (Taylor) formulation}.

The Lax-Wendroff \cite{LxW60} procedure is a numerical scheme 
that updates the solution
using finitely many terms from the Taylor series of the function.
Here,
one first expands $q(t, \x)$ in time about $t=t^n$:
\begin{equation} \label{eqn:lax-wendroff}
   q(t,\x) = q^n + (t-t^n) q^n_{,t} + \frac{(t-t^n)^2}{2!} q^n_{,tt}
   + \cdots,
\end{equation}
and then each time derivative is replaced with a spatial derivative
via the Cauchy-Kowalewski procedure:
\begin{equation}\label{eqn:cauchy-kowalewski}
\begin{aligned}
   q_{,t}  &= -\nabla_\x \cdot R(q), \\
   q_{,tt} &= -\nabla_{\x} \cdot R(q)_{t} = - \nabla_{\x} \cdot \left( R'(q) q_{,t} \right) = 
   \nabla_{\x} \cdot \left( R'(q) \cdot \nabla_{\x} \cdot R(q) \right), \\
   \vdots &
\end{aligned}
\end{equation}
%
Further derivatives are required for higher order variants, and of course, one
still needs to choose a spatial discretization.
Inserting $t=t^n+\dt$
produces a single-stage, single-step method.  
In addition, this is
fundamentally different than the MOL discretization, because the physical and
temporal variables are intimately intertwined through the choice of the spatial
discretization of the right hand side of \eqref{eqn:lax-wendroff}.
\end{itemize}

Multiderivative Runge-Kutta time integrators form the bridge that unifies these
two disparate families of methods by defining a framework that includes
each of them as special cases.
We will see that the generalization presented in this work
makes use of techniques
used in the development of high-order method of 
lines formulations as well as high-order Lax-Wendroff type time discretizations.

\subsection{High-order method of lines formulation for PDEs}

The most popular high-order time integrators for hyperbolic problems fall 
into the method of lines category.  By and large, the predominant viewpoint from
the community is to develop spatial discretizations separate from time 
integrators.  This is 
incredibly attractive from a software engineering 
perspective:
one can envision developing a code that completely decouples the ODE
technology from the spatial discretization.
In addition, the MOL formulation invites developers to concentrate efforts
on ODE integrators as a separate entity from the PDE.
However, this idealization is lacking given that a numerical scheme
is intended to solve a PDE, and therefore one needs to respect the choice of
spatial discretization not only when selecting an ODE integrator, but also
when developing one.  In contrast, Taylor methods require a recognition of the 
particular choice of spatial discretization.

\subsection{High-order Lax-Wendroff methods for PDEs}

The Lax-Wendroff procedure is much older than either Lax or Wendroff.
Indeed, a more appropriate name would be the Cauchy-Kowalewski procedure,
where Cauchy and Kowalewski sought methods that
could aid them in proving existence and uniqueness for solutions to PDEs.
Their combined method, known as the Cauchy-Kowalewski procedure, was
presented in
equation \eqref{eqn:cauchy-kowalewski}, and is derived from 
Brook Taylor's method,
who invented equation \eqref{eqn:lax-wendroff} in the 1700's.
For a modern (mid-$20^\text{th}$ century) proof and review of the 
Cauchy-Kowalewski procedure see Friedman \cite{Friedman61} or 
Fusaro \cite{Fusaro68} and references therein.
In 1960, Peter Lax and Burton Wendroff \cite{LxW60} realized
the Cauchy-Kowalewski procedure could be used as a numerical method.
They started with the theoretical groundwork developed by Cauchy
and Kowalewski and derived a  
numerical scheme for solving PDEs.
Therefore, this entire procedure is often cited as the Lax-Wendroff
method within the numerical analysis community.

The original Lax-Wendroff method was a second-order numerical
discretization of the Cauchy-Kowalewski procedure, and
over the past decade, much work has been put forth to define
high-order variants of this method.
In 2002, Toro and Titarev started work on a series of papers
that became the basis for the so-called ADER (Arbitrary DERivative) methods
that define high-order versions of the Lax-Wendroff procedure 
\cite{proceedings:TitarevToro02,ToroTitarev02,ToroTitarev05:JCP,TitarevToro05:JCP:systems,ToroTitarev05:JSP}.
During that same time period, Daru and Tenaud \cite{DaruTenaud04}
explored high-order monotonicity preserving single-step methods, and they 
derived TVD flux limiters to control spurious oscillations.
In 2003, Jianxian Qiu and his collaborators demonstrated a high-order extension
of the Lax-Wendroff procedure using finite difference weighted essentially non-oscillatory (WENO) methods 
\cite{QiuShu03}.  
Later on, they applied the same procedure to Hamilton-Jacobi systems 
as well as the shallow water 
equations \cite{Qiu07,LuQiu11}.
Additionally, Qiu, Dumbser and Shu showed how to apply the Lax-Wendroff 
scheme to the discontinuous Galerkin (DG) method 
\cite{QiuDumbserShu05}, and shortly thereafter, 
Dumbser and Munz 
followed a similar procedure for constructing DG methods to arbitrarily high
orders of accuracy using generalized Riemann solvers
\cite{DuMu06}.
High-order versions of a Lax-Wendroff discontinuous Galerkin method
have been investigated for ideal magnetohydrodynamic equations 
\cite{TaDuBaDiMu07}, and
explorations into various numerical flux functions
for the Lax-Wendroff DG method has also been carried
out \cite{JQiu07}.
It has already been noted that high-order schemes with Lax-Wendroff type
time discretizations can be implemented to carry a low-memory footprint 
\cite{LiuChengShu09}.

Much of the difficulty when constructing high-order versions of the 
Lax-Wendroff scheme comes from the necessity of defining higher spatial 
derivatives of the solution.
After producing the Jacobian of the flux function, the next time derivative
produces the Hessian 
of the flux function. Further derivatives require tensors
which grow vastly in size, and therefore,
one of the primary concerns with a high-order
Lax-Wendroff method is the burden of implementing higher
derivatives, especially in higher dimensions.

\subsection{High-order multistage multiderivative methods for PDEs}

In this work, we advocate the use of multistage multiderivative
integrators for solving hyperbolic conservation laws.  These time integrators
are the natural generalization of MOL formulations as well as pure Taylor 
(Lax-Wendroff) methods.  The introduction of higher derivatives allows one to design compact
stencils, and the introduction of degrees of freedom to Taylor methods
allows one to explore closely related alternatives.
We argue that the benefits of exploring multiderivative time integrators
include, but are not limited to the following:

\begin{itemize}
\item {\bf High-order accuracy} [{\it $3^{rd}$-order or higher}].  
Explicit multiderivative schemes can be constructed to arbitrarily high
orders of accuracy.
We focus on a single fourth-order method as our demonstrative
example.

\smallskip
\item {\bf Portability.}  
Access to the eigen-decomposition of a hyperbolic problem is a necessity.
Therefore, multistage multiderivative integrators that stop at the second derivative 
do not require anything above and beyond anything that is already called for,
and therefore, they are more portable than pure Lax-Wendroff (Taylor) methods.

\smallskip

\item {\bf Low-storage.}  
Multiderivative integrators carry a small memory
footprint.
By design, these integrators exchange storage for
extra FLOPs in order to attain high-order accuracy.
This feature is a desirable trait for high performance computing given than the
current trend is towards inexpensive FLOPs and expensive memory. 

%
%
\end{itemize}

The primary purpose of the present work is to demonstrate 
how multistage multiderivative integrators can be used
to solve PDEs.
Given the plethora of multiderivative methods from the ODE 
community, we choose to select demonstrative examples
that can be easily modified to accommodate all explicit 
multiderivative methods.  In particular, most of our numerical
results will focus on simulations for a particular 
fourth-order example that serves as a representative example of 
a method that falls outside the confines of the Runge-Kutta and Taylor families.

The outline of this paper is as follows:
we begin in \S\ref{sec:methods}
with a historical review of multiderivative integrators.
In \S\ref{sec:weno}, we describe the finite difference WENO scheme, and we
demonstrate how multiderivative technology can be applied to the WENO framework.
In \S\ref{sec:dg}, we continue by looking at multiderivative integrators for 
the discontinuous Galerkin method.
In \S\ref{sec:numerical_examples} we present numerical results for our
numerous numerical test problems, and in 
\S\ref{sec:conclusions}
we draw up conclusions and point to future work.


\section{High-order explicit multiderivative ODE integrators}
\label{sec:methods}

Multistage multiderivative integrators for PDEs require a blend of 
both the the method of lines (MOL) formulation
as well as the Lax-Wendroff formulation of 
\eqref{eqn:hyperbolic-law},
and in addition, one needs to select a method for the 
spatial discretization.
We begin our description of multistage multiderivative PDE solvers
with a historical overview of these methods within the confines of ODEs.
In particular, we focus on explicit multiderivative Runge-Kutta integrators,
which include single-derivative methods 
(e.g.\ classical Runge-Kutta) as well as
single-stage methods (e.g.\ Taylor) as special cases.
We begin with a review of multiderivative methods
in \S\ref{subsec:ode-integrators-review},
and then continue in 
\S\ref{subsec:ode-integrators-definitions} by defining a large class
of explicit multiderivative Runge-Kutta methods.
In \S\ref{subsec:ode-integrators-example}, we present model examples
of methods from this class.


\subsection{Multiderivative methods: a review}
\label{subsec:ode-integrators-review}

Numerical methods using multiderivative technology have a long history 
dating back to at least the early 1940's, and
some of the pioneering work for explicit schemes share a common ancestry
with implicit schemes.
In 1963, Stroud and Stancu \cite{StSt63} applied the quadrature method of
Tur\'an \cite{Tu50}, which generated an implicit, high-order multiderivative 
ODE solver.
Prior to Tur\'an's work, in 1940, Obreshkoff \cite{Obreschkoff40} derived discrete
quadrature formulae for integrating functions,
and much like Tur\'an did, Obreshkoff used extra derivatives of the function
for his quadrature rules.
When extra derivatives are included, one can obtain methods with
excellent properties for the numerical integration of ODEs, including
high-order accuracy involving fewer quadrature points than would otherwise
be required.
In 1972, Kastlunger and Wanner 
\cite{KaWa72} used the theory of Butcher trees to show
that Tur\'an's quadrature method
could be written as an implicit multiderivative Runge-Kutta scheme.
The following year, Hairer and Wanner \cite{HaWa73} defined
``multistep multistage multiderivative methods'', 
that to date, has stood the test of time as being a
broad categorical definition of numerical methods for solving ODEs.
A concise taxonomy of this large class of methods is 
presented in Figure \ref{fig:mmmm}.
In particular, their definition contains 
all Runge-Kutta and all linear multistep methods as well as additional
combinations, including so-called general linear methods 
(c.f.\ John Butcher's extensive review papers
\cite{Butcher85,Butcher96,Butcher06} 
for a description of general linear methods).
The textbooks of Hairer, N{\o}rsett, and Wanner  \cite{HaWa09,HaWa06}
contain excellent references. 

\begin{figure}
\begin{center}
\includegraphics[width=0.7\textwidth]{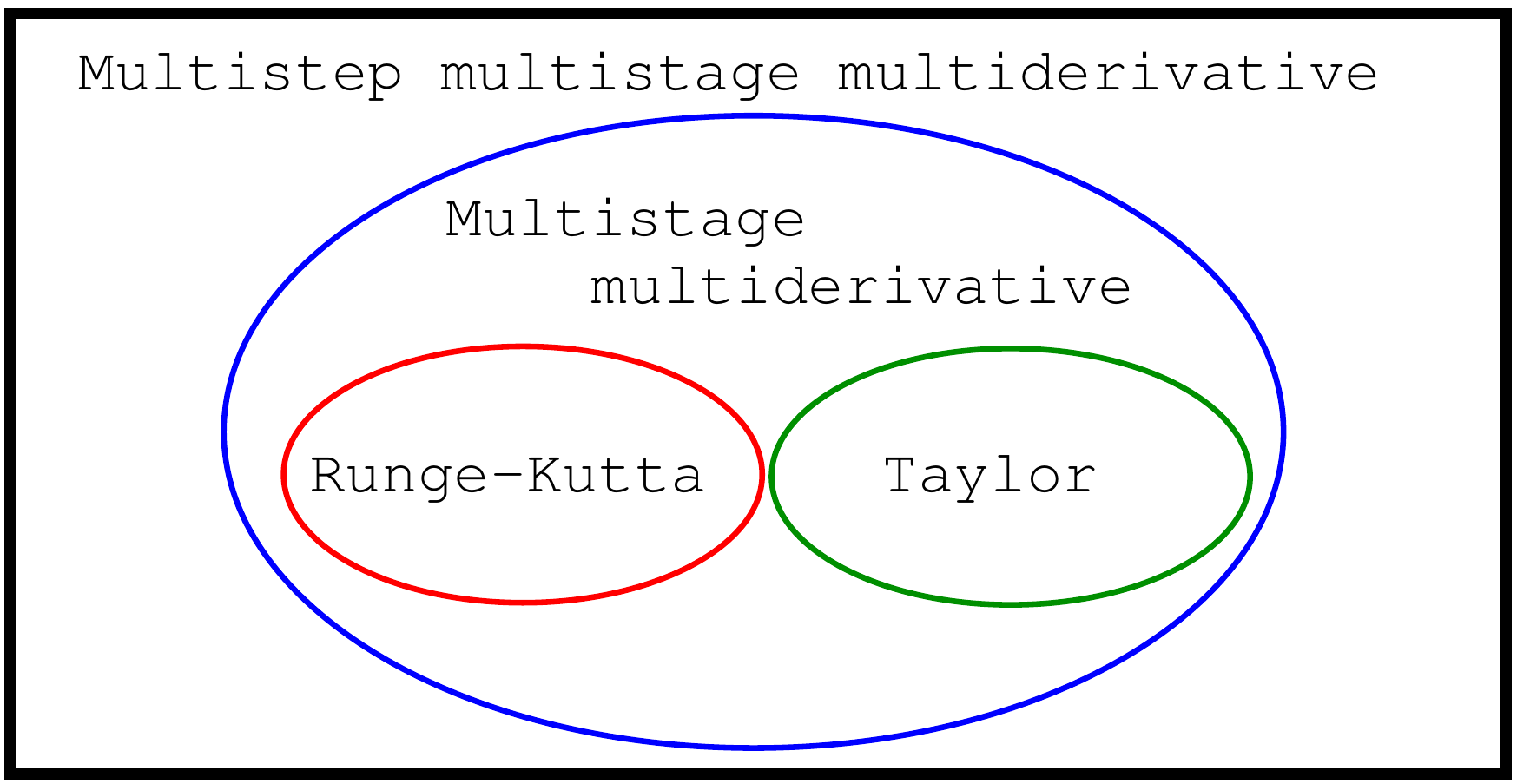}
\caption{A simple taxonomy of ODE solvers.
Multistep multistage multiderivative methods 
as defined by Hairer and Wanner \cite{HaWa73}
are the most inclusive class presented in this diagram.
Our focus is on multistage multiderivative methods
that include 
Runge-Kutta (a.k.a. multistage) and Taylor (a.k.a. multiderivative) 
as special cases.
\label{fig:mmmm}
}
\end{center}
\end{figure}

Our current focus is on explicit versions of multiderivative Runge-Kutta 
schemes, which needless to say, also have a long history of development.
Despite their age, these methods have 
seen little to no attention outside the ODE community, yet given the direction
of modern computer architecture, many of these methods may see use for solving
PDEs in the near future.
In 1952, Rudolf Zurm{\"u}hl 
\cite{Zurm52} 
investigated multiderivative Runge-Kutta
integrators, and later on, Erwin Fehlberg 
\cite{Fehlberg60,Fehlberg64}
derived an explicit multiderivative Runge-Kutta methods.
Early versions of Fehlberg's method
applied a single-derivative Runge-Kutta method to the modified variable that
is constructed by subtracting out $m$-derivatives of the original variable.
A decade later, Kastlunger and Wanner \cite{KaWa72-RK} 
extended Butcher's method to multiderivative Runge-Kutta methods.
In their work, they defined the order conditions for the coefficients 
in a multiderivative Butcher tableau, and in addition,
they showed that Fehlberg's method \cite{Fehlberg60,Fehlberg64}
can be written as a multiderivative Runge-Kutta process
with $m$-derivatives taken at a single node.
During that same decade, Shintani \cite{Shintani71,Shintani72}
worked on multiderivative Runge-Kutta methods.
Also in the 1970's, Bettis and Horn \cite{BettisHorn76} revisited
Fehlberg's scheme and reformulated it as an embedded Taylor method: 
for their celestial mechanics problem, 
they describe how the necessary Taylor series coefficients can be generated 
with little to no additional cost.
A decade later and unaware of the full history of the methods,
Mutsui \cite{Mitsui82} also worked on Runge-Kutta methods 
that leveraged extra information with extra derivatives.

The most recent work on explicit multiderivative integrators appears
to focus on redefining order conditions and demonstrating examples of 
methods from this class, much of which has been carried out independently
from previous work.
In 1986, Gekeler and Widmann \cite{GeWi86} used the theory of
Butcher trees to define the correct order conditions for 
multiderivative Runge-Kutta methods.  In their work,
they presented families of methods with orders ranging between four and seven.
Goeken and Johnson \cite{GoJo99,GoJo00} were unaware of the long standing
history of explicit multiderivative methods, and they derived their own
versions of explicit methods that are sub-optimal.
Within the past decade,
Yoshida and Ono \cite{OnYo03,OnYo04} and Chan and Tsai \cite{ChTs10}
used the theory of Butcher trees to define order conditions and presented
numerous examples.
The primary difference between the latter two works is the following:
Chan and Tsai used multiple stages for their methods, and they 
restrict their 
attention to using two derivatives of the unknown function;
Yoshida and Ono restrict their attention to two-stage methods,
and they admit arbitrarily many derivatives of the unknown function to be
evaluated at every quadrature point.
In other very recent work,
Nguyen-Ba, Bo{\v{z}}i{\'c}, Kengne and Vaillancourt \cite{BaBo09}
derived a nine-stage
explicit multiderivative Runge-Kutta
scheme that makes use of extra derivatives at a single quadrature point only,
much like the schemes Fehlberg
originally investigated \cite{Fehlberg60,Fehlberg64}.  In doing so,
the order conditions become simpler to navigate.

For the purposes of solving hyperbolic conservation laws, we view
using at most two-derivatives of the function as the most appropriate
choice given the opportunity to retain portable code.
Hyperbolic problems require a definition of the Jacobian of 
the flux function, which is precisely the term required to define
a two-derivative scheme.
Investigations into methods using extra derivatives would 
make for interesting future research.

\subsection{Multistage multiderivative methods: some definitions}
\label{subsec:ode-integrators-definitions}

Consider a system of ODEs, 
defined by
\begin{equation}\label{eqn:ode}
   \dot{y} = L(y), \quad y(0) = y_0, \quad t > 0.
\end{equation}
We use the letter $L$ in place of $f$ to avoid conflict with the flux
function defined later on in equation \eqref{eqn:1dhyper}.
Without loss of generality, we assume the system is autonomous.
Multiderivative methods make use of extra derivatives of \eqref{eqn:ode}.
If we take a single time derivative of \eqref{eqn:ode}, we see that
\begin{align}
   \ddot{y} = \dot{L} = L'(y)\, \dot{y} = \pd{L}{y} L( y ),
\end{align}
where $\pd{L}{y}$ denotes the partial derivative of $L$ with respect to $y$.

Higher derivatives can be computed recursively.  Define the $m^{th}$ 
derivative of $y$ as 
$y^{(m)} := \frac{d^m y}{dt^m}$, and observe that
$y^{(m+1)} = L^{(m)}( y )$.
Using the chain rule, we see that
\begin{equation}\label{eqn:md-recursion}
   y^{(m+1)} = L^{(m)}( y ) 
             = \pd{L^{(m-1)}}{y} \dot{y} 
             = \pd{L^{(m-1)}}{y} L(y), \quad m \in \Z_{\geq1}.
\end{equation}
We note that these functions can be computed analytically for ODEs, especially
given access to symbolic differentiation software.  For PDEs, these higher
derivatives will require the use of the Cauchy Kowalewski procedure
from equation
\eqref{eqn:cauchy-kowalewski}, together with definitions for higher-order
spatial derivatives.  It will be shown in \S\S\ref{subsec:weno-md} and
\ref{subsec:dg-md} that WENO and DG make use of very
different techniques for defining these higher time derivatives.

\begin{defn} \label{def:multideriv-rk}
Given a collection $\{ a^{(m)}_{ij}, b^{(m)}_i \}$ of real scalars,
a multiderivative Runge-Kutta  scheme with $s$-stages and $r$-derivatives
is any update of the form
\begin{equation}
  y_{n+1} = y_n + \sum_{m=1}^r \dt^m \sum_{i=1}^s b^{(m)}_i L^{(m-1)}( y^{(i)} ),
\end{equation}
where intermediate stage values are given by
\begin{equation}
  y^{(i)} = y_n + 
  \sum_{m=1}^r \dt^m \sum_{j=1}^s a^{(m)}_{ij} L^{(m-1)}(y^{(j)}),
\end{equation}
and the total time derivatives of $L$ are given by equation
\eqref{eqn:md-recursion}.
If $a^{(m)}_{ij} = 0$ whenever $i \leq j$, the method is explicit, otherwise it is
implicit.
\end{defn}

We remark that both Taylor and traditional Runge-Kutta
methods are special cases Definition \ref{def:multideriv-rk}:
setting $r=1$ produces traditional Runge-Kutta methods,
and setting $s=1$ produces the Taylor class of methods, with no degrees of
freedom for choosing the $b_i^{(m)}$.
In Table \ref{table:butcher_tableau}, we present the complete Butcher tableau 
for a multiderivative Runge-Kutta method, and in Table
\ref{table:butcher_tableau_taylor},
we present the Butcher tableau for the $r^{th}$-order explicit Taylor method.

\begin{table}
\centering
\normalsize
\caption{Butcher tableau for a multiderivative Runge-Kutta method.
Each $c_i, a_{ij}^{(m)}$ and $b_i^{(m)}$ are real coefficients that define the
method using Definition \ref{def:multideriv-rk}.
For simplicity, we assume time independence, and so the
$c_i$ play no factor in the discretization.  In addition,
we have
$c_i = c_i^{(m)}$, which in general, does not need to be the case.
Classical Runge-Kutta methods are special cases of this form, where
$r=1$, and $a^{(0)}_{ij} = a_{ij}$ where the $a_{ij}$ are the coefficients for
the (single-derivative) Runge-Kutta method.
Explicit Taylor methods have no degrees of freedom, nor stages.
\label{table:butcher_tableau}
}
\begin{tabular}{c|ccc|c|ccc}
$c_1$ & $a_{11}^{(1)}$ & $\cdots$ & $a_{1s}^{(1)}$ 
     & 
     & $a_{11}^{(r)}$ & $\cdots$ & $a_{1s}^{(r)}$ \\
$\vdots$ & $\vdots$ & $\ddots$ & $\vdots$ & $\cdots$ & & $\ddots$ & \\
$c_s$ & $a_{s1}^{(1)}$ & $\cdots$ & $a_{ss}^{(1)}$ 
     & 
     & $a_{s1}^{(r)}$ & $\cdots$ & $a_{ss}^{(r)}$ \\
\hline
& $b_1^{(1)}$ 
& $\cdots$ 
& $b_s^{(1)}$  
& $\cdots$ 
& $b_1^{(r)}$ 
& $\cdots$ 
& $b_s^{(r)}$  
\end{tabular}
\end{table}

\begin{table}
\centering
\normalsize
\caption{Butcher tableau for the explicit Taylor method.
Here, we present the Butcher coefficients for the $r^{th}$-order explicit
Taylor method:
$y_{n+1} = y_n + \sum_{m=1}^r \dt^m L^{(m-1)}(y_n)$, where the $L^{(m-1)}$
describe total time derivatives of equation \eqref{eqn:ode} given by
equation \eqref{eqn:md-recursion}.
Note that there are no degrees of freedom for choosing the $b_i^{(m)}$, 
because they 
are prescribed by $b_1^{(m)} = 1/m!$.
\label{table:butcher_tableau_taylor}
}

\begin{tabular}{c|c|c|c|c|c|c}
   $0$ & & & & & & \\
   \hline
   & $1$ & $1/2!$ & $\cdots$ & $1/m!$ & $\cdots$ & $1/r!$
\end{tabular}
\end{table}

Our definition is an equivalent, yet distinctly different version
of what can be found in other sources (c.f. \cite{HaWa09}).
It is possible to  define
intermediate stages through defining and saving $L^{(m)}( y^{(i)} )$, 
but we prefer Definition \ref{def:multideriv-rk} 
because of the potential for a low storage implementation, at the cost of 
recomputing previously observed values.
For hyperbolic conservation laws, we consider methods that use at most
two-derivatives to be the most portable given that users must have access to
the eigen-decomposition of their problem.

\begin{defn} 
\label{def:two-deriv-rk}
Given a collection $\{ a^{(1)}_{ij}, a^{(2)}_{ij}, b^{(1)}_i  b^{(2)}_i \}$ 
of real scalars,
an $s$-stage, two-derivative Runge-Kutta (TDRK) scheme is any update of the form
\begin{equation}
\label{eqn:explicit-method-two-derivative-update}
y_{n+1} = y_n +  \dt  \sum_{i=1}^s b^{(1)}_i L( y^{(i)} ) 
+ \dt^2 \sum_{i=1}^s b^{(2)}_i \dot{L}( y^{(i)} ),
\end{equation}
where intermediate stage values are given by
\begin{equation}
\label{eqn:explicit-method-two-derivative-stage}
y^{(i)} = y_n + \dt   \sum_{j=1}^s a^{(1)}_{ij} L( y^{(j)} ) 
+ \dt^2 \sum_{j=1}^s a^{(2)}_{ij} \dot{L}( y^{(j) }).
\end{equation}
If $a^{(m)}_{ij} = 0$ for all $i \leq j$, the method is explicit.  
\end{defn}

Before presenting examples of methods from this class, we would like to
draw some comparisons 
between the popular special cases of the multistep multistage 
multiderivative methods.
Our aim is to discuss advantages each method has for being coupled
with numerical PDE solvers, and in particular, we would like to focus on which
methods have promise for working well with new computer architectures.


Traditional Runge-Kutta methods are far and wide the most popular
for solving hyperbolic conservation laws, yet we see room for improvement
given the current direction of computer architecture.
Runge-Kutta methods are easy to implement, and therefore, they are
the most portable of all multistage multiderivative methods.
They are self-starting and can easily change their time step size, which
is an important characteristic to have for solving hyperbolic conservation laws.
In addition, when compared with their natural counterpart, 
the Adams family of methods (e.g. linear multistep methods),
Runge-Kutta methods 
have stability regions that are more favorable
for hyperbolic problems.
For example, on a purely oscillatory problem,
the maximum stable time step for classical fourth-order Runge-Kutta is given by
$| z | \leq \sqrt{8} \approx  2.8$, where $z = \lambda \dt$ is
purely imaginary.
For the same cost and identical storage, one would be able to take four time
steps with fourth-order Adams Bashforth.
Even after rescaling, the maximum stable time step for the equivalent
Adams method would be restricted to $| z | \lessapprox  1.72$.
It would seem that Runge-Kutta methods are ideally suited for solving
hyperbolic conservation laws.  They can be derived to require low-storage
\cite{Williamson80,Ke08,Ke10,NieDieBu12}, can be designed to acquire
strong stability preserving (SSP) properties 
\cite{GoShuTa01,Go05,Ke08}, and are very portable, especially given that they
are self starting.
However, traditional Runge-Kutta methods are not optimal with their 
memory usage,
and to date, even the low-storage Runge-Kutta methods require many stages,
and therefore they may require considerable communication overhead when
compared to pure Taylor schemes.

Taylor methods lie on the other extreme of the multistage multiderivative
methods:
we claim that they can be implemented to have optimally low-storage for
hyperbolic problems,
and can contain minimal communication overhead.
However, pure Taylor methods are the least portable of the time integrators discussed here.
In order to implement a high-order Taylor (e.g. Lax-Wendroff) method for solving
a PDE, one needs to have access to high derivatives of the unknown, which
puts them out of reach from many scientists.
We recognize that this can certainly be done for very complicated 
problems \cite{TaDuBaDiMu07}, but it is difficult to convince users of legacy 
codes to modify them in order to reach high-order accuracy.
On the plus side, high-order Taylor methods 
contain favorable stability regions for hyperbolic
conservation laws, and given that they're single-step methods, they
have nominal communication overhead.  However, the only degrees of freedom
allowed when choosing these methods is the spatial discretization, given that
the time coefficients come directly from the Taylor series.

Given that multiderivative Runge-Kutta methods are a generalization of
traditional Runge-Kutta and pure Taylor methods, it is possible to design
methods from this class that can retain
desirable qualities from each sub-class.
In order to retain portability, we view multiderivative Runge-Kutta methods
that use at most two-derivatives 
as optimal for hyperbolic conservation laws, especially given that most
codes already have access to, or at least users would be willing to implement
the Jacobian of the flux function.
Beyond two-derivatives, we would argue the ``many''-derivative Runge-Kutta 
methods start to lose their portability.
However, given the large size of this class, there is much room for
investigation into what methods work ``best'' with modern architectures.

\subsection{Multistage multiderivative methods: building blocks and examples}
\label{subsec:ode-integrators-example}

Our aim is to describe how to take a multiderivative method from the
ODE literature and formulate a hyperbolic solver using that method.  
In this subsection, we describe a simple building block that can be generalized
to accommodate all explicit multistage multiderivative methods.

The building block we will focus on for the remainder of this paper is given by
the following:
\begin{equation} 
\label{eqn:md-stage}
       y = y_n + 
           \left( \alpha   {\dt} L(y_n) + \beta   \dt^2 \dot{L}(y_n) \right) + 
           \left( \alpha^* {\dt} L(y^*) + \beta^* \dt^2 \dot{L}(y^*) \right).
\end{equation}
In this equation, $y$ could be the full update, as in
$y = y_{n+1}$ from 
equation \eqref{eqn:explicit-method-two-derivative-update}
or a stage value
$y = y^{(i)}$ from equation
\eqref{eqn:explicit-method-two-derivative-stage}.
The key to using this equation to solve PDEs is to provide a definition for
$L$ and $\dot{L}$.

We prefer introducing $\alpha$ and $\beta$ over 
$a_{ij}^{(m)}$ and $b_{i}^{(m)}$ from Definitions 
\ref{def:multideriv-rk} and
\ref{def:two-deriv-rk} because
these letters delete unnecessary indices and the upcoming
descriptions for the PDE methods will introduce further indices that would
become cumbersome.

\begin{rmk}Extensions to multistage, `many'-derivative methods follow by adding 
extra terms to equation \eqref{eqn:md-stage}.  
\end{rmk}

More stages require more terms to be added to \eqref{eqn:md-stage}.  For
example, a three stage, two-derivative method is entirely defined after defining
updates of the form:
\begin{equation}\label{eqn:md-stage-td-3stage}
\begin{aligned}
   y = y_n &+ \left( \alpha \dt L(y_n) + \beta \dt^2 \dot{L}(y_n) \right)
            + \left( \alpha^* \dt L(y^*) + \beta^* \dt^2 \dot{L}(y^*) \right) \\
           &+ \left( \alpha^{**} \dt L(y^{**}) + \beta^{**} \dt^2 \dot{L}(y^{**}) \right)
\end{aligned}
\end{equation}
for arbitrary values of $\alpha$ and $\beta$.  Again, the $y$ in this equation
can be a single stage value 
$y = y^{(i)}$ as in equation
\eqref{eqn:explicit-method-two-derivative-stage}, or a full update, as in 
equation \eqref{eqn:explicit-method-two-derivative-update}.

We point out that three-derivative,
two-stage methods can be formulated with
\begin{equation}\label{eqn:md-stage-taylor3}
\begin{aligned}
   y = y_n &+ \left( \alpha \dt L(y_n) + \beta \dt^2 \dot{L}(y_n) + \gamma \dt^3
                     \ddot{L}( y_n )\right) \\
           &+ \left( \alpha^* \dt L(y^*) + \beta^* \dt^2 \dot{L}(y^*) +
                     \gamma^* \dt^3 \ddot{L}( y^* )\right).
\end{aligned}
\end{equation}
Note that setting $\alpha = 1$, $\beta = 1/2$ and $\alpha^* = \beta^* = 0$ in equation \eqref{eqn:md-stage} 
produces the second-order Taylor method, and setting 
$\alpha = 1$, $\beta = 1/2$, $\gamma = 1/6$ and
$\alpha^* = \beta^* = \gamma^* = 0$ in equation
\eqref{eqn:md-stage-taylor3} produces the third order Taylor method.

\subsubsection{Multistage multiderivative methods: some examples}

We now describe how equation \eqref{eqn:md-stage} can be used to construct
multiderivative methods.  We assert that these methods have not necessarily been
optimized for hyperbolic problems; our chief objective is to demonstrate how to
implement these methods.  An investigation into optimized schemes will be
pursued in the future.

A third order, two-stage, two-derivative method (TDRK3) \cite{ChTs10} is given by:
\begin{equation}\label{eqn:tdrk3}
\begin{aligned}
   y^{*}   &= y_n + \dt L(y_n) + \frac{ (\dt)^2 }{ 2 } \dot{L}( y_n ) , \\
   y_{n+1} &= y_n + \dt \left( \frac2 3 L(y_n) + \frac1 3 L(y^*) \right) 
                  + \frac{\dt^2}{6} \dot{L}( y_n ).
\end{aligned}
\end{equation}
This method can be constructed by first inserting
\[\alpha = 1, \quad \beta = 1/2, \quad \alpha^* = \beta^* = 0,\] 
into equation \eqref{eqn:md-stage} to construct 
the intermediate stage, and the final update is given by selecting
\[\alpha = 1, \quad \beta = 1/6, \quad \alpha^* = 0 \quad \text{and} \quad \beta^*
= 1/3.\]
The Butcher tableau for this method is provided in Table \ref{table:tdrk3}, and
the region of absolute stability is plotted in Figure \ref{fig:two-derivative-stability},
which is identical to any three stage classical Runge-Kutta method.

\begin{table}
\normalsize
\centering
\caption{Butcher tableau for a third-order two-derivative method.
Presented here are the coefficients as in Table 
\ref{table:butcher_tableau} for an explicit, third-order method \cite{ChTs10}.
Note that all diagonal and upper-triangular
entries are zero, meaning that the scheme is explicit.
}
\begin{tabular}{c|cc|cc}
$0$   & 0 & 0 & 0 & 0 \\
$1$ & $1/2$ & 0 &
$1/8$ & $0$ \\
\hline
& $2/3$ & $1/3$ & $1/6$ & $0$
\end{tabular}
\label{table:tdrk3}
\end{table}

\begin{figure}
\normalsize
\centering
\includegraphics[width=1.0\textwidth]{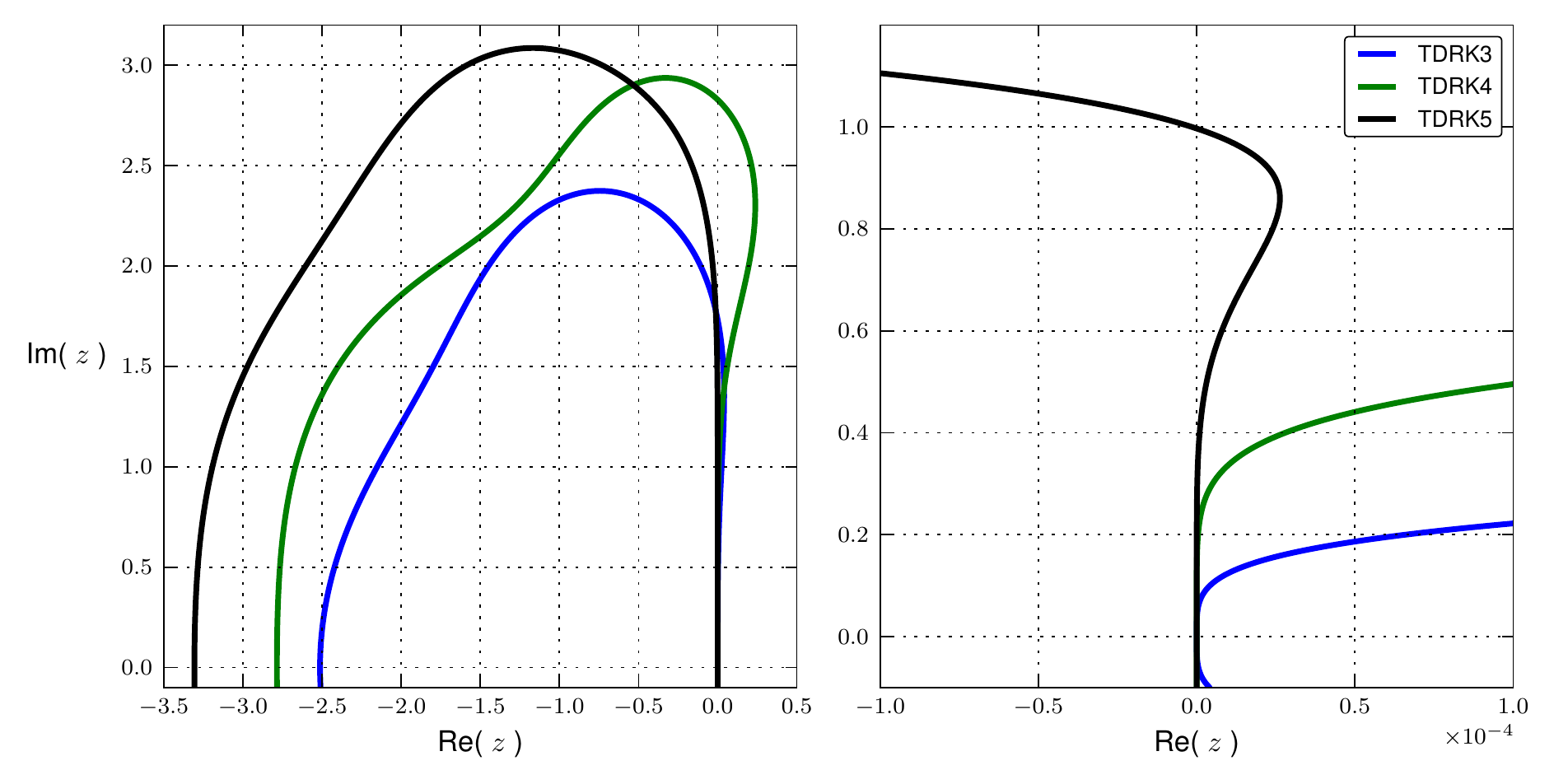}
\caption{Regions of absolute stability.
Here, we plot the regions of absolute stability for three different
two-derivative methods that are derived in Chan and Tsai \cite{ChTs10}: 
TDRK3 \eqref{eqn:tdrk3}, TDRK4 \eqref{eqn:exp4} and TDRK5 \eqref{eqn:tdrk5}
which are in order of smallest to largest.
The picture on the right is a zoomed in picture of the imaginary axis.
Note that the third and fourth-order methods have regions of absolute
stability identical to classical three and four stage, respectively, RK
methods.  Of particular importance for hyperbolic problems
is the fact that each of these integrators
contain part of the imaginary axis \cite{bLe02}.
}
\label{fig:two-derivative-stability}
\end{figure}

A fifth-order, three-stage, two-derivative method \cite{ChTs10} is given by:
\begin{equation}\label{eqn:tdrk5}
\begin{aligned}
   y^{*}   &= 
       y_n + \frac{2}{5} \dt L(y_n) + \frac{2}{ 25 } \dt^2 \dot{L}( y_n ) , \\
   y^{**}  &= 
       y_n + \dt L(y_n) + \dt^2 
       \left( -\frac{1}{4} \dot{L}( y_n ) + \frac{3}{4} \dot{L}( y^* ) \right) , \\
   y_{n+1} &= 
       y_n + \dt L(y_n) + \dt^2 \left( \frac{1}{8} \dot{L}( y_n ) +
       \frac{25}{72} \dot{L}(y^*) + \frac{1}{36} \dot{L}(y^{**}) \right).
\end{aligned}
\end{equation}
This method can be constructed by first inserting
\[  \alpha = 2/5, \quad \beta = 2/25, \quad 
    \alpha^* = \beta^* = 0, \quad
    \alpha^{**} = \beta^{**} = 0\] 
into equation 
\eqref{eqn:md-stage-td-3stage}
to produce $y^*$,  followed by inserting
\[  \alpha = 1, \quad \beta = -1/4, \quad 
    \alpha^* = 0, \quad \beta^* = 3/4, \quad
    \alpha^{**} = \beta^{**} = 0,\] 
into \eqref{eqn:md-stage-td-3stage} to produces a third stage, $y^{**}$.
The final update is then given by selecting
\[\alpha = 1, \quad \beta = 1/8, \quad \alpha^* = \alpha^{**} = 0, \quad
\beta^* = \frac{25}{72}
    \quad \text{and} \quad \beta^{**} = \frac{1}{36}. \]
The Butcher tableau for this method is provided in Table \ref{table:tdrk5},
and the region of absolute stability is plotted in Figure \ref{fig:two-derivative-stability}.

\begin{table}
\normalsize
\centering
\caption{Butcher tableau for a fifth-order, three-stage, two-derivative method.
Presented here are the coefficients as in Table 
\ref{table:butcher_tableau} for an explicit, fifth-order method \cite{ChTs10}.
Note that all diagonal and upper-triangular
entries are zero, meaning that the scheme is explicit.
We remark that this scheme has not necessarily been optimized
given that some entries were zeroed out by choice in order to reduce the
complexity of the order conditions.  However, 
we present this example given that it contains a favorable stability region
because it contains part of the imaginary axis (c.f. Figure
\ref{fig:two-derivative-stability}).
\label{table:tdrk5}
}
\begin{tabular}{c|ccc|ccc}
$0$   & 0 & 0 & 0 & 0 & 0 & 0\\
$2/5$ & $2/5$ & 0 & 0 & $2/25$ & 0 & 0\\
$1$   & $1$ & 0 & 0 & $-1/4$ & $3/4$ & 0\\
\hline
& $1$ & 0 & 0 & $1/8$ & $25/72$ & $1/36$
\end{tabular}
\end{table}

\subsubsection{Multistage multiderivative methods: the canonical example}

The example used for the remainder of this work will now be presented.
There is a unique combination for an $s=2$-stage method that
produces fourth-order accuracy \cite{KaWa72-RK,OnYo04,ChTs10}.  
This method, which we refer to as TDRK4, can be
written as
\begin{equation}\label{eqn:exp4}
\begin{aligned}
   y^{*}   &= y_n + \frac{\dt}{2} L_n 
                  + \frac{ (\dt/2)^2 }{ 2 } \dot{L}( y_n ) , \\
   y_{n+1} &= y_n + \dt L_n + \frac{\dt^2}{2} 
       \left[ \frac1 3\left( \dot{L}( y_n )  + 2 \dot{L}( y^* ) \right) \right],
\end{aligned}
\end{equation}
The Butcher tableau for this method is presented in
Table \ref{table:tdrk4}, and the region of absolute stability is plotted in
Figure \ref{fig:two-derivative-stability}.    Note that the region of absolute stability
is identical to any four-stage, fourth-order Runge-Kutta method.

We choose to use this method as our canonical example for three reasons.  First,
this example is the simplest scheme that does not fall under the Taylor or
Runge-Kutta umbrella, and therefore serves as a demonstrative example of new
methods that can be found from this class.  Second, 
this method has been optimized for low storage and high-order accuracy given 
two stages, and two-derivatives, and thirdly, this method works well for
hyperbolic problems.

\begin{table}
\normalsize
\centering
\caption{Butcher tableau for the multiderivative method investigated in this
work.  Presented here are the coefficients as in Table 
\ref{table:butcher_tableau} for the explicit, fourth-order method, TDRK4 presented
in equation \eqref{eqn:exp4}.  Note that all diagonal and upper-triangular
entries are zero, meaning that the scheme is explicit.
\label{table:tdrk4}
}
\begin{tabular}{c|cc|cc}
$0$   & 0 & 0 & 0 & 0 \\
$1/2$ & $1/2$ & 0 &
$1/8$ & $0$ \\
\hline
& $1$ 
& $0$
& $1/6$
& $1/3$
\end{tabular}
\end{table}

Given that our goal is to describe how to implement the large class of
explicit multistage multiderivative methods for solving PDEs,
for simplicity of exposition, we use this method as 
our canonical example of a method from this class.
In addition, we claim that a complete description for this scheme will provide 
the necessary
mechanisms for extension and investigation into integrators containing extra
stages.  These integrators can be constructed to be even higher 
order accurate and contain
favorable stability regions.  For example, all of the methods presented in Chan
and Tsai \cite{ChTs10} can be implemented using our description with a 
straight-forward extension of what follows.

Observe that the TDRK4 method in equation \eqref{eqn:exp4}
can be constructed from of equation \eqref{eqn:md-stage}, 
with the first stage
given by 
\[\alpha = 1/2, \quad \beta = 1/8, \quad \alpha^* = \beta^* = 0,\] 
and the update given by 
\[\alpha = 1, \quad \beta = 1/6, \quad \alpha^* = 0 \quad \text{and} \quad \beta^*
= 1/3.\]

This completes our description of the multiderivative Runge-Kutta scheme, 
and it bears repeating that without loss of generality, we
will use method \eqref{eqn:exp4} as our canonical example of a method from this
class.
Given that the focus of this work is how to implement this scheme for solving
hyperbolic PDEs, we still need to describe how to discretize in space.

\section{The finite difference WENO method}
\label{sec:weno}

The finite difference weighted essentially non-oscillatory (WENO) method 
has many variations and a long history of development.
The original method was developed by Shu and Osher \cite{ShuOsher87,ShuOsher89},
and later analyzed and further developed by Shu and his collaborators 
\cite{JiangShu96,Shu97,Shu01}.  
For a recent comprehensive review of the many variations of WENO schemes, 
see Chi-Wang Shu's extensive review paper \cite{Shu09}.
In this work, we consider the fifth-order WENO-Z scheme 
\cite{HeAsPo05,BoCaCoWai08,CaCoDo11}, that is an improvement of the classical
fifth-order Jiang and Shu (WENO-JS) scheme \cite{JiangShu96}.
The underlying choice of the reconstruction procedure is not central to this
work and our description will be generic enough to accommodate most 
variations of the classical WENO method.
However, given that there are many options, we explain the minimal details 
necessary to reproduce the present work.

In \S\ref{subsec:weno-fd} we present the classical MOL formulation for the
WENO method.  In \S\ref{subsec:weno-md} we describe the extension of
multiderivative ODE integrators presented in
\S\S\ref{subsec:ode-integrators-definitions}--\ref{subsec:ode-integrators-example},
to formulate the multiderivative
WENO method that is the subject of this section.
In \S\ref{subsec:dg-md} we
will see that the extension of multiderivative ODE methods from 
\S\ref{sec:methods}
to the DG method requires a very different approach.

\subsection{The finite difference WENO method: MOL formulation}
\label{subsec:weno-fd}
%
%
We begin our description with a reduction of 
\eqref{eqn:hyperbolic-law} to a 
1D conservation law:
\begin{equation}
\label{eqn:1dhyper}
   q_{,t} + f(q)_x =0, \quad q(0,x) = q_0(x), \quad x \in \Omega = [a,b].
\end{equation}
Here, we follow the 
standard convention of naming our flux function $f$ in place of $R$.
Moreover, \eqref{eqn:1dhyper} is \emph{hyperbolic} if the Jacobian
$f'(q) = R \Lambda R^{-1}$ is diagonalizable with real 
eigenvalues for all $q$ in the domain of interest.  

The spatial discretization for a finite difference method seeks a point-wise
approximation to the exact solution of \eqref{eqn:1dhyper}
at a finite collection of points.
We start with a uniform discretization of $[a,b]$ using $m_x$ points:
\begin{equation}
   \dx = (b-a) / m_x, \quad
   \quad x_i = a + (i-1/2) \dx, \quad
   \quad i\in \left\{ 1, 2, \ldots, m_x \right\},
\end{equation}
and we seek values $q_i$ that approximate the exact solution at each grid point,
$q_i(t) \approx q(t, x_i)$.

In order to write \eqref{eqn:1dhyper} in discrete flux-difference form so we
can have a conservative method\footnote{A finite difference method is
\emph{conservative} if the method satisfies $\frac{d}{dt}\left( \sum_i q_i(t)
\right) = 0$ on a periodic (or infinite) domain.},
we begin by defining an implicit \emph{sliding function} $h$ through
\begin{equation}
   f\left( q(t, x) \right) 
   = \frac{1}{\dx} \int_{x - \dx/2}^{x + \dx/2 } h(t, x)\, dx.
\end{equation}
With this definition in place, we have the nice result that 
\begin{equation}
   \frac{\partial f}{ \partial x}\left( q(t,x_i) \right) = \frac{1}{\dx}
   \left[ h\left(t, x_{i+1/2}\right) - h\left(t, x_{i-1/2} \right) \right],
\end{equation}
which is precisely what is needed to define a discrete flux difference
formulation of $q_t = -f_x$.
The MOL formulation for the finite difference scheme defines
interpolated values $h_{i+1/2}$ that approximate $h\left(t, x_{i+1/2}\right)$
to high-order accuracy,
\begin{equation}
   \frac{1}{\dx} \left( h_{i+1/2} - h_{i-1/2} \right) = 
   \frac{\partial f}{ \partial x}\left( q(t,x_i) \right) 
   + \BigO\left( \dx^M \right).
\end{equation}
Moreover, an astute observation means that $h$ need never be computed
\cite{Shu97,Shu09}:
define cell averages of $h(t,x)$ as
\begin{equation}
   \h_i := \frac 1 \dx \int_{x_{i-1/2}}^{ x_{i+1/2} } h(t, x)\, dx,
\end{equation}
then observe that $f( q_i(t) ) = \h_i$, and therefore point
values of $f$ can be interpreted as cell averages of $h$.
After defining an appropriate interpolating algorithm for producing
high-order interface values $h_{i+1/2}$ from cell averages $\h_i$,
the full MOL formulation is given by,
\begin{equation}\label{eqn:fd-L}
   \frac{d}{dt} q_i( t ) = -\frac{1}{\dx} \left( h_{i+1/2} - h_{i-1/2} \right).
\end{equation}
This scheme is automatically 
conservative as it is written in flux difference form.
Usually one applies a high-order explicit Runge-Kutta integrator to
\eqref{eqn:fd-L}, which results in what
is normally called the Runge-Kutta WENO (RK-WENO) 
method.

\subsubsection{The finite difference WENO method: the reconstruction procedure}
\label{subsubsec:weno-weights}

A conservative reconstruction procedure requires a single value $h_{i+1/2}$
for equation
\eqref{eqn:fd-L}
to be defined at each grid interface.
In the ensuing discussion, we suppress the time dependence of $h$, and assume
that we have known cell averages
$\h_i$ for a function $h = h(x)$.
The fifth-order WENO method uses
a five point stencil shifted to the left or right of
the interface:
\begin{align*}
   h_{i+1/2}^+ &:= WENO5^+[ \h_{i-2}, \h_{i-2}, \h_{i-1}, \h_i, \h_{i+2} ], \\
   h_{i+1/2}^- &:= WENO5^-[ \h_{i-1}, \h_{i},   \h_{i+1}, \h_{i+2}, \h_{i+3} ].
\end{align*}
Here, we define coefficients for the function $WENO5^+$, and note that 
by symmetry, the 
reconstruction procedure weighted in the other direction can be 
observed by flipping the stencil:
\begin{align}
   WENO5^-[ \h_{i-1}, \h_{i},   \ldots \h_{i+3} ] :=
   WENO5^+[ \h_{i+3}, \h_{i+2}, \ldots \h_{i-1} ].
\end{align}

Three sub-stencils 
$S_0 = \{ \h_{i-2}, \h_{i-1}, \h_{i  } \}$,
$S_1 = \{ \h_{i-1}, \h_{i  }, \h_{i+1} \}$, and
$S_2 = \{ \h_{i  }, \h_{i+1}, \h_{i+2} \}$
uniquely 
define three quadratic polynomials $p_j(x)$
that have the same cell
averages for each element in their stencil.  
Each polynomial defines a competing
value for $h(x_{i+1/2})$ with $h^{(j)}_{i+1/2} = p_j( x_{i+1/2} )$:
\begin{subequations}
\begin{align}
\label{eqn:weno-interp0}
   h^{(0)}_{i+1/2} &= \phantom{-} 
   \frac{1}{3} \h_{i-2} -  \frac{7}{6} \h_{i-1} + \frac{11}{6} \h_i, \\
\label{eqn:weno-interp1}
   h^{(1)}_{i+1/2} &= - 
   \frac{1}{6} \h_{i-1} +  \frac{5}{6} \h_{i\phantom{-1}} + \frac1 3 \h_{i+1}, \\
\label{eqn:weno-interp2}
   h^{(2)}_{i+1/2} &= \phantom{-} 
   \frac{1}{3} \h_{i\phantom{-1}} +  \frac{5}{6} \h_{i+1} - \frac{1}{6} \h_{i+2}.
\end{align}
\end{subequations}

%
The \emph{linear weights} 
$\gamma_j = \left\{1/10,\, 3/5,\, 3/10\right\}$ 
are defined as the unique linear combination 
of equations \eqref{eqn:weno-interp0}-\eqref{eqn:weno-interp2} 
that yields a fifth-order accurate point value for $h(x_{i+1/2})$:
\begin{align}
   h_{i+1/2} = \gamma_0 h^{(0)}_{i+1/2} +
               \gamma_1 h^{(1)}_{i+1/2} +
               \gamma_2 h^{(2)}_{i+1/2}.
\end{align}
%
The WENO procedure replaces the linear weights $\gamma_j$ with nonlinear 
weights $\omega_j$ that are necessary in regions with strong shocks.
The Jiang and Shu \emph{smoothness indicators} $\beta_j$ place a 
quantitative measure
on the smoothness of each stencil based on a Sobolev norm:
\begin{align}
   \beta_j := \sum_{l=1}^k \dx^{2l-1} \int_{x_{i-1/2}}^{x_{i+1/2}} 
   \left( \frac{d^l}{dx^l} p_j(x) \right)^2\, dx,
\end{align}
which for our fifth order method, are given by
\begin{align}
\begin{aligned}
   \beta_0 &=  \frac{13}{12} \left(  \h_{i-2}- 2\h_{i-1} +  \h_i     \right)^2
             + \frac{1}{4}   \left(  \h_{i-2}- 4\h_{i-1} + 3\h_i     \right)^2, \\
   \beta_1 &=  \frac{13}{12} \left(  \h_{i-1}- 2\h_{i}   +  \h_{i+1} \right)^2
             + \frac{1}{4}   \left(  \h_{i-1}-  \h_{i+1}             \right)^2, \\
   \beta_2 &=  \frac{13}{12} \left(  \h_{i}  - 2\h_{i+1} +  \h_{i+2} \right)^2
             + \frac{1}{4}   \left( 3\h_{i}  - 4\h_{i+1} +  \h_{i+2} \right)^2.
\end{aligned}
\end{align}
The \emph{non-linear} WENO-Z weights $\omega^z_j$ are a slight modification of the
classical weights $\omega_j$.  The new weights require the computation of
a single additional parameter
$\tau^5 = \left|\beta_2-\beta_0\right|$:
\begin{align}
   \omega^z_k = \frac{\tilde{\omega}^z_k}{ \sum_{l=0}^2 \tilde{\omega}^z_l }, 
       \quad
   \tilde{\omega}^z_k = \frac{\gamma_k}{\beta^z_k}, 
       \quad 
   \beta^z_k =  1 + \left( \frac{\tau^5}{\beta_k+\eps} \right)^p.
\end{align}
We use the power parameter $p=2$ and regularization parameter
$\eps = 10^{-12}$ for all of our simulations.
With these definitions in place, the final interpolated value is defined as
\begin{align}
 WENO5^+[ \h_{i-2}, \ldots \h_{i+2} ] := 
 \omega^z_0 h^{(0)}_{i+1/2} +
 \omega^z_1 h^{(1)}_{i+1/2} +
 \omega^z_2 h^{(2)}_{i+1/2}.
\end{align}

\subsubsection{The finite difference WENO method: MOL formulation for systems}

For a system of conservation laws, the reconstruction procedure
described in \S\ref{subsubsec:weno-weights} is carried out 
locally on each of the scalar characteristic variables:

\subsection*{\bf Finite difference WENO procedure for 1D systems}

\smallskip

\begin{enumerate}

%
%
\item
For each $i$, evaluate $f_i = f( q_i )$ at each mesh point
and compute average values of $q$ at the half grid points
\begin{align} \label{eqn:roe-avg}
   q^*_{i-1/2} = \frac{1}{2}\left( q_i + q_{i-1} \right).
\end{align}
Roe averages \cite{Roe81} may be used in place of \eqref{eqn:roe-avg},
but all the results presented in this work use this arithmetic average.

\item
Compute the left and right eigenvalue decomposition
of $f'(q) = R \Lambda R^{-1}$ at the half-grid points:
\begin{align}
   R_{i-1/2} = R( q^*_{i-1/2} ), \quad
   R^{-1}_{i-1/2} = R^{-1}( q^*_{i-1/2} ).
\end{align}
Compute $\alpha := \max_i | f'(q^*_{i-1/2}) |$ as the fastest wave speed in
the entire system.  For stability, we follow the common practice of
increasing $\alpha$ by exactly 10\% in
order to completely contain the fastest wave speed,
that is, we set $\alpha = 1.1 \cdot \max_i | f'(q^*_{i-1/2}) |$.
The value $| f'(q) |$ is the maximum absolute value
of all eigenvalues of the Jacobian, $f'(q)$.
\smallskip

\item
For each $i$, determine the weighted ENO stencil  $\{i + r\}$ surrounding $i$.
In fifth order WENO, the full stencil is 
given by $r \in \{-3, -2, -1, 0,1,2 \}$.  Project each $q_{i+r}$ and flux 
values $f_{i+r}$ onto the characteristic variables using $R^{-1}_{i-1/2}$:
\begin{subequations}
\begin{align}
   w_{ i + r } = R^{-1}_{i-1/2} \cdot q_{i + r}, \\
   g_{i + r }   = R^{-1}_{i-1/2} \cdot f_{i + r}.
\end{align}
\end{subequations}
Apply Lax-Friedrichs flux splitting on $g_{i+r}$:
\begin{align}
   g^{\pm}_{i+r} = \frac{1}{2}\left( g_{i+r} \pm \alpha \,  w_{i+r} \right).
\end{align}
As an alternative, one could use a local wave speed.
Note that for each component, these definitions automatically satisfy 
$\frac{d g^+}{dw}  \geq 0$ and
$\frac{d g^-}{dw}  \leq 0$.

\smallskip

\item
Perform a WENO reconstruction on the characteristic variables.  
Use the stencil which uses an extra point on the upwind direction for 
defining $g^\pm$:
\begin{align*}
 \hat{g}^+_{i-1/2} &= 
 WENO5^+ \left[ g^+_{i-3}, g^+_{i-2}, g^+_{i-1}, g^+_{i}, g^+_{i+1} \right], \\
 \hat{g}^-_{i-1/2} &= 
 WENO5^- \left[ g^-_{i-2}, g^-_{i-1}, g^-_{i}, g^-_{i+1}, g^-_{i+2} \right].
\end{align*}
Define $\hat{g}_{i-1/2} := \hat{g}^+_{i-1/2}  + \hat{g}^-_{i-1/2}$.

\item
Using same projection matrix, $R_{i-1/2}$, project characteristic variables 
back onto the conserved variables:
\begin{align}
   \hat{f}_{i-1/2} := R_{i-1/2} \cdot \hat{g}_{i-1/2}.
\end{align}

\end{enumerate}

\subsection{The finite difference WENO method: a multiderivative formulation}
\label{subsec:weno-md}

In order to implement multiderivative methods into the WENO framework,
we closely follow previous work on Lax-Wendroff WENO methods
\cite{QiuShu03}, but in place of relying on a single-step Taylor
series, we build intermediate stages of the form given by \eqref{eqn:md-stage}.
Consider two `stages', $q^n$ and $q^*$ that provide a pointwise approximation
to the exact solution.
If we apply \eqref{eqn:cauchy-kowalewski} to each $q^n$ and $q^*$
and insert the result into \eqref{eqn:md-stage}, 
we see that the starting point for putting together a multiderivative WENO
integrator is to define the following for arbitrary values of 
$\alpha$, $\alpha^*$, $\beta$ and $\beta^*$:
\begin{equation}\label{eqn:md-stage-weno}
\begin{split}
   q = q^n &- \alpha   \dt f\left( q^n \right)_{\!,\,x} 
            - \alpha^* \dt f\left( q^* \right)_{\!,\,x}  \\
           &- \beta   \dt^2 \left( f'(q^n) q^n_{,t} \right)_{\!,\,x}
            - \beta^* \dt^2 \left( f'(q^*) q^*_{,t} \right)_{\!,\,x}.
\end{split}
\end{equation}
Note that we have retained the value $q_{,t}$ in
place of substituting $q_{,t} = -f_{,x}$ into the last two terms.
This is different than what will be done in \S\ref{subsec:dg-md} for the
multiderivative DG method.
The complete multiderivative WENO method is given by the following:

\subsection*{\bf Multistage multiderivative WENO procedure}

\smallskip

\begin{enumerate}

\item
Given two pointwise approximations $q^n$ and $q^*$,
perform a single WENO reconstruction on each piece to construct the following 
values:
\begin{align}
   q^n_{i,t} := 
       -\frac{1}{\dx}\left( \hat{f}^n_{i+1/2} - \hat{f}^n_{i-1/2} \right),
   \quad
   q^*_{i,t} := 
       -\frac{1}{\dx}\left( \hat{f}^*_{i+1/2} - \hat{f}^*_{i-1/2} \right).
\end{align}

\smallskip

\item
Define the pointwise values 
${G}^n_{i} := f'\left( q^n_i \right) q^n_{i,t}$ and
${G}^*_{i} := f'\left( q^*_i \right) q^*_{i,t}$.
Note that we do not need to decompose $q$ onto the 
characteristic variables.

\smallskip

\item
For each $G^n$ and $G^*$ in Step 2, 
compute a finite difference approximation to $G^n_{,x}$ and $G^*_{,x}$.
Given that this term inherits an extra factor of $\dt$, we can use
the fourth-order centered finite difference:
\begin{align}
   D_x G_{i} := \frac{1}{12 \dx}
   \left ( G_{i-2}-8G_{i-1} + 8 G_{i+1} - G_{i+2} \right).
\end{align}
Using a centered stencil means that the method will automatically be 
conservative.  
The complete WENO discretization of \eqref{eqn:md-stage-weno} is now,
\begin{equation}
\label{eqn:md-stage-weno2}
   q_{i} = q_{i}^n 
       + \alpha   \dt q^n_{i,t}
       + \alpha^* \dt q^*_{i,t}
        - \beta   \dt^2 D_{x} G^n_{i}
        - \beta^* \dt^2 D_{x} G^*_{i}.
\end{equation}
This equation defines the building block for creating any two-derivative
Runge-Kutta method, 
because the definitions for $L$ and $\dot{L}$ from Definition
\ref{def:two-deriv-rk} are now in place.
\end{enumerate}

We now summarize the entire multiderivative WENO procedure.  Given a 
multiderivative Runge-Kutta scheme, we use a WENO reconstruction procedure 
to define $q_{,t}$.
Higher derivatives $q_{,tt}$, etc. are defined by 
first using the Cauchy-Kowalewski procedure in \eqref{eqn:cauchy-kowalewski}
to define exact formulas for these terms.  The spatial derivatives that show up
in these terms are then discretized by using finite differences.
Once $q_{,t}, q_{,tt}$, etc., have been defined, one uses the coefficients from
the multiderivative scheme to construct stages as well as a final update.  
This entire
procedure generalizes both the Lax-Wendroff and RK-WENO
methods.  For completeness, we prove that the proposed scheme is conservative
by showing that equation \eqref{eqn:md-stage-weno2} is conservative
for any choice of $\alpha$, $\alpha^*$, $\beta$ and $\beta^*$ that are defined
by the selected multiderivative scheme.

\begin{thm} \label{thm:weno-mass-conservation}
The proposed multistage multiderivative WENO scheme is mass conservative.
\end{thm}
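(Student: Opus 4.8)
The plan is to show that summing the update \eqref{eqn:md-stage-weno2} over all grid indices $i$ annihilates every correction term, so that $\sum_i q_i = \sum_i q_i^n$ on a periodic (or infinite) domain, in accordance with the definition of conservation given in the footnote of \S\ref{subsec:weno-fd}. Since the coefficients $\alpha$, $\alpha^*$, $\beta$, $\beta^*$ enter \eqref{eqn:md-stage-weno2} linearly, it suffices to treat each of the four correction terms separately and verify that its sum over $i$ vanishes independently of the coefficient values.

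First I would handle the two first-derivative contributions. Both $q^n_{i,t}$ and $q^*_{i,t}$ are defined directly in flux-difference form, $q^n_{i,t} = -\frac{1}{\dx}(\hat{f}^n_{i+1/2}-\hat{f}^n_{i-1/2})$, so their sums telescope. On a periodic grid the interface flux at the right boundary coincides with the one at the left boundary, and hence $\sum_i q^n_{i,t}=0$ and likewise $\sum_i q^*_{i,t}=0$.

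The only point requiring a moment of thought is the two second-derivative terms $D_x G^n_i$ and $D_x G^*_i$, and this is the step I would treat most carefully. The cleanest route is to exhibit the centered operator $D_x$ itself as a flux difference: one checks that
\begin{equation}
 D_x G_i = \frac{1}{\dx}\left( \tilde{G}_{i+1/2} - \tilde{G}_{i-1/2} \right), \qquad
 \tilde{G}_{i+1/2} = \frac{1}{12}\left( -G_{i-1} + 7 G_i + 7 G_{i+1} - G_{i+2} \right),
\end{equation}
by expanding the difference and matching the stencil coefficients $(1,-8,8,-1)/12$. With $D_x$ written in conservative form, $\sum_i D_x G^n_i$ and $\sum_i D_x G^*_i$ telescope and vanish exactly as the first-derivative terms do. Equivalently, one may simply note that the stencil weights satisfy $1-8+8-1=0$ and that each shifted sum $\sum_i G_{i+k}$ equals $\sum_i G_i$ on a periodic grid, so the combination cancels.

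Collecting the four vanishing sums gives $\sum_i q_i = \sum_i q_i^n$ for any admissible choice of $\alpha$, $\alpha^*$, $\beta$, $\beta^*$. Because every intermediate stage and the final update of any two-derivative Runge-Kutta method are instances of \eqref{eqn:md-stage-weno2}, applying the argument stage-by-stage yields $\sum_i q_i^{n+1}=\sum_i q_i^n$, which is the claimed mass conservation. I expect no serious obstacle here; the entire argument rests on the single observation that \emph{both} the WENO reconstruction and the centered difference $D_x$ are in flux-difference form, so that telescoping is the essential mechanism and the result holds uniformly in the method's free parameters.
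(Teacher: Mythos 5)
Your proposal is correct and takes essentially the same approach as the paper: sum the update \eqref{eqn:md-stage-weno2} over $i$, use linearity in $\alpha, \alpha^*, \beta, \beta^*$, and show each of the four correction sums vanishes on a periodic domain --- the $q_{,t}$ terms by telescoping of the flux differences, and the $D_x G$ terms because the shifted sums $\sum_i G_{i+k}$ coincide and the stencil weights satisfy $1-8+8-1=0$, which is precisely your ``equivalently'' remark. Your primary route for the $D_x$ terms --- exhibiting the centered difference as a flux difference with $\tilde{G}_{i+1/2} = \tfrac{1}{12}\left(-G_{i-1}+7G_i+7G_{i+1}-G_{i+2}\right)$ --- is a correct minor variant (and slightly stronger, since it displays the operator in locally conservative form), but the cancellation mechanism is identical to the paper's.
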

\begin{proof}
The final update, $q^{n+1}_i$ is given by equation
\eqref{eqn:md-stage-weno2} 
for a collection of coefficients $\alpha$, $\alpha^*$, $\beta$ and $\beta^*$
that depend on the selected scheme.  Summing over $i$ produces
\begin{subequations}
\begin{align}
\sum_{i} q^{n+1}_i &= \sum_i \left( q^n_i + \alpha   \dt q^n_{i,t}
       + \alpha^* \dt q^*_{i,t}
        - \beta   \dt^2 D_{x} G^n_{i}
        - \beta^* \dt^2 D_{x} G^*_{i}\right) \\ \label{eqn:weno-conservationb}
        &= \sum_i q^n_i + \alpha \dt \sum_i q^n_{i,t}
        + \alpha^* \dt \sum_i q^*_{i,t}
        + \beta   \dt^2 \sum_i D_x G_{i}
        + \beta^* \dt^2 \sum_i D_x G^*_{i}.
\end{align}
\end{subequations}
Conservation will follow after showing that the last four terms in equation
\eqref{eqn:weno-conservationb} are zero.  First, observe that
\begin{subequations}
\begin{align}
\sum_i q^n_{i,t} &= -\frac{1}{\dx}\sum_i \left( \hat{f}^n_{i+1/2} - \hat{f}^n_{i-1/2} \right)
= -\frac{1}{\dx} \left( \sum_i \hat{f}^n_{i+1/2} - \sum_i \hat{f}^n_{i-1/2} \right) \\
&= -\frac{1}{\dx} \left( \sum_i \hat{f}^n_{i+1/2} - \sum_i \hat{f}^n_{i+1/2} \right)
= 0.
\end{align}
\end{subequations}
Similarly, $\sum_i q^*_{i,t} = 0$.  The last two terms sum to zero because
a central difference stencil is used:
\begin{subequations}
\begin{align}
\sum_i D_x G^n_{i} &= 
\frac{1}{12 \dx} \sum_i 
   \left ( G^n_{i-2}-8G^n_{i-1} + 8 G^n_{i+1} - G^n_{i+2} \right) \\
   &= \frac{1}{12\dx} \left( 
	\sum_i G^n_{i-2}-8 \sum_i G^n_{i-1} + 8 \sum_i G^n_{i+1} - \sum_i G^n_{i+2} \right) \\
   &= \frac{1}{12\dx} \left( 
	\sum_i G^n_{i}-8 \sum_i G^n_{i} + 8 \sum_i G^n_{i} - \sum_i G^n_{i} \right) = 0.
\end{align}
\end{subequations}
Similarly, $\sum_i D_x G^*_i = 0$, and therefore 
$\sum_i q^{n+1}_i = \sum_i q^{n}_i$ for all $n$. \qed
\end{proof}

\begin{rmk}{Various multistage multiderivative WENO methods can be built by 
repeated application of \eqref{eqn:md-stage-weno2} with different values of
$\alpha, \alpha^*, \beta$ and $\beta^*$.}
\end{rmk}

The procedure for constructing multiderivative WENO
methods using \eqref{eqn:md-stage-weno2}
is identical to that already presented in 
\S\ref{subsec:ode-integrators-example} for ODEs.
For example, setting $\beta = \beta^* = 0$ reproduces Runge-Kutta methods, and
setting $\alpha^* = \beta^* = 0$ reproduces the second-order Lax-Wendroff WENO
method, provided we define $\alpha = 1$, and $\beta = 1/2$.
The $s=3$-stage analogue of equation \eqref{eqn:md-stage-taylor3} for PDEs
is given by
\begin{align}
\begin{split}
   q_i = q_i^n & + \alpha   \dt q^n_{i,t}
                 + \alpha^* \dt q^*_{i,t}
                 + \alpha^{**} \dt q^{**}_{i,t} \\
               & - \beta     \dt^2 D_{x} G^n_{i}
                 - \beta^*    \dt^2 D_{x} G^*_{i}
                 - \beta^{**} \dt^2 D_{x} G^{**}_{i}.
\end{split}
\end{align}
In addition, further derivatives can be included, but one would need
to revisit \eqref{eqn:cauchy-kowalewski} before inserting these terms.
We would expect that higher derivatives
can be approximated using smaller centered finite difference stencils
because they get multiplied by increasing powers of $\dt$
(c.f. \cite{QiuShu03} for further details).

We repeat that equation \eqref{eqn:md-stage-weno2} finishes 
the spatial discretization
of \eqref{eqn:md-stage}.  If we appeal to this equation twice we can 
construct the
unique two-stage, fourth-order method, TDRK4 that is the PDE analogue of
\eqref{eqn:exp4}:
\begin{subequations}
\begin{align}
q^*_i &= q^n_i -\frac{\dt}{2\dx}\left( \hat{f}^n_{i+1/2} - \hat{f}^n_{i-1/2} \right)
- \frac{(\dt/2)^2}{2} \left( D_x G_i \right); \\
q^{n+1}_i &= q^n_i - \frac{\dt}{\dx}\left( \hat{f}^n_{i+1/2} - \hat{f}^n_{i-1/2} \right)
- \frac{\dt^2}{6}\left( D_x G_i + 2 D_x G^*_i \right).
\end{align}
\end{subequations}

Numerous examples of this WENO scheme
are provided in \S\ref{sec:numerical_examples}, where 
we compare this method with classical fourth-order 
Runge-Kutta (RK4), as well as the third-order 
strong stability preserving (SSP) method
of Shu and Osher \cite{ShuOsher87}.
Before presenting results, 
we first describe an implementation of
a multistage multiderivative discontinuous Galerkin method that will
require a different approach to define the higher temporal derivatives.

\section{The discontinuous Galerkin (DG) method}
\label{sec:dg}


The discontinuous-Galerkin (DG) method dates back to 1973 when Reed \& Hill 
\cite{ReedHill73} developed the scheme for solving a neutron 
transport equation.
The theoretical framework for the DG method was solidified by 
Bernardo Cockburn and Chi-Wang Shu through a lengthy series of papers.
We refer the reader to their extensive review article and references
therein \cite{CoShu01}.
In this section, we define the notation used for the remainder of this 
paper and provide minimal details necessary for reproducing this body of work.
This section focuses on describing the Runge-Kutta discontinuous 
Galerkin (RKDG) scheme, and in \S\ref{subsec:dg-md}, we introduce the 
multiderivative technology to the DG framework.  We use similar notation to that
was previously introduced \cite{RoSe10}, and for further details on the DG method, 
we direct the reader the references (e.g. \cite{CoShu01,Seal12}).

Similar to the layout of \S\ref{sec:weno}, we begin this section
with the classical MOL formulation in \S\ref{subsec:dg-mol},
and continue in \S\ref{subsec:dg-md} with
the proposed multiderivative DG method.  We repeat that much like the
material from \S\ref{subsec:weno-md}, the multiderivative DG formulation relies on
the multiderivative ODE methods from \S\ref{sec:methods} but requires 
a very different application of the Cauchy
Kowalewski procedure from equation
\eqref{eqn:cauchy-kowalewski}.

\subsection{The discontinuous Galerkin (DG) method: a MOL formulation}
\label{subsec:dg-mol}

The DG method solves a discretization of the weak formulation of the 
hyperbolic conservation law \eqref{eqn:1dhyper}.
The continuous weak formulation can be realized by multiplying 
\eqref{eqn:1dhyper} with
a test function $\varphi$, and integrating by parts over a control volume
$\Tm = [x_\ell, x_r]$:
\begin{align} \label{eqn:1dweak-cont}
   \int_{x_\ell}^{x_r}\,  \varphi q_{,t}\, dx = 
         \int_{x_\ell}^{x_r} \varphi_{,x}\,  f(q)\, dx 
       -  \Bigl( 
           \left.\varphi f( q )\, \right\rvert_{x_r} - 
           \left.\varphi f( q )\,  \right\rvert_{x_\ell} \Bigr).
\end{align}

We begin our discretization by defining a grid containing $m_x$ cells 
for the domain $[a,b]$, each of whose width is $\Delta x = (b-a) / m_x$.
The $i^{th}$ grid cell is denoted by
$\mathcal{T}_i = [x_{i-1/2},x_{i+1/2}]$, where the cell edges are given by
\mbox{$x_{i-1/2} = a + (i-1)\dx$}, for $i=1,2,\ldots, m_x+1$, and the
cell centers are given by
\mbox{$x_{i} = a + (i-1/2)\dx$}, for $i=1,2,\ldots, m_x$.
For simplicity of exposition, we will restrict our attention to a uniform grid.
On this grid we define the {\it broken} finite element space
\begin{align}
   W^h = \left\{ w^h \in L^{\infty}(\Omega): \,
   w^h |_{\Tm} \in P^{p}, \, \forall \Tm \in \Tm_h \right\},
\end{align}
where $h = \Delta x$.
The above expression means that on each element $\Tm$, $w^h$ will
be a polynomial of degree at most $p$, and no continuity is assumed
across element edges.
Each element $\Tm_i$ can be mapped to the canonical element 
$\xi \in [-1,1]$ via the 
linear transformation 
\begin{equation}\label{eqn:canonical-transform}
   x = x_i + \xi \, \frac{\Delta x}{2}.
\end{equation}
%
Note that after a change of variables, spatial derivatives obey the following 
rule:
$\frac{\partial}{\partial x} = \frac{2}{\dx} \frac{\partial}{\partial \xi}$.
For the canonical element, we construct a set of basis functions that
are orthonormal with respect to the
following inner product:
\begin{align}
 \Bigl\langle \varphi^{(\ell)}, \, \varphi^{(k)} \Bigr\rangle :=  
 \frac1 2\int_{-1}^1 
        \varphi^{(\ell)}_{{}}(\xi) \varphi^{(k)}_{\text{}}(\xi)\ d\xi
   = \de_{\ell k},
\end{align}
where $ \de_{\ell k}$ is the Kronecker delta function.
This defines the Legendre basis functions:
\begin{align}
   \varphi^{(\ell)} = \Biggl\{ 1, \, \, \,  \sqrt{3} \, \xi,
    \, \, \,  \frac{\sqrt{5}}{2} \left( 3 \xi^2 - 1 \right), \, \, \,
   \frac{\sqrt{7}}{2} (5 \xi^3 - 3 \xi), \, \, \, \dots \Biggr\}.
\end{align}

We consider approximate solutions of the hyperbolic conservation law 
\eqref{eqn:1dhyper} that are defined by a finite set of coefficients
$\left\{ Q^{(k)}_i \right\}$ of the 
basis functions 
$\left\{ \varphi^{(k)}\right\}$.
When restricted to a single cell $\Tm_i$, the approximate solution $q^h$ is
\begin{equation}
\label{eqn:q_nsatz-1d}
 {q}^h(t, x ) \Bigl|_{\Tm_{i}}  :=
   q_i^h(t, \xi ) = 
     \sum_{k=1}^{M}  Q^{(k)}_{i}(t) \, \varphi_{\text{}}^{(k)}(\xi),
\end{equation}
where $M$ is the order of accuracy.
The initial conditions are 
determined from the $L^2$-projection of $q^h(0,x)$ onto the 
basis functions,
\begin{equation}
   \label{eqn:l2project1d}
   Q^{(k)}_{i}(0) := \Bigl\langle {q}_i^h(0, \xi), \,
                     \varphi_{\text{}}^{(k)}(\xi) \Bigr\rangle,
\end{equation}
which we evaluate using $M$ standard Gaussian quadrature points.

The semi-discrete weak formulation of \eqref{eqn:1dhyper} is given by 
discretizing \eqref{eqn:1dweak-cont} with a finite set of basis functions and
a finite set of control volumes.  Inserting our discrete spatial
representation \eqref{eqn:q_nsatz-1d} into the continuous 
weak formulation \eqref{eqn:1dweak-cont} and using
$\varphi = \varphi^{(k)}$ for our test function,
we produce the semi-discrete weak formulation
\begin{equation} \label{eqn:discrete-weak-long}
\begin{split}
   \Bigl\langle {q}_{i,t}^h( t, \xi ), \varphi^{(k)}(\xi) \Bigr\rangle
   =&\  \frac{2}{\dx} \Bigl\langle f\left( {q}_{i}^h( t, \xi ) \right),  
       {{\varphi_{\xi}^{(k)}}}{}(\xi) \Bigr\rangle \\
   & - \frac{1}{\dx} \varphi^{(k)}(\xi=+1) f^\downarrow( q^h(t,x_{i+1/2} ) \\
   & + \frac{1}{\dx} \varphi^{(k)}(\xi=-1) f^\downarrow( q^h(t, x_{i-1/2} ) ),
\end{split}
\end{equation}
where the \emph{flux values} $f^\downarrow( q^h(t, x_{i-1/2} ) )$ will be
defined through an appropriate Riemann solver.  These terms
will become responsible for all inter-cell communication.  

Before going into the details of the Riemann solver,
we seek a simpler representation of \eqref{eqn:discrete-weak-long}.
We start by defining the first term on the right hand side of
\eqref{eqn:discrete-weak-long} as the \emph{interior integral}
\begin{align}
   N^{(k)}_i := \frac{1}{\dx} \int_{-1}^1
   \varphi^{(k)}_{\xi}\left( \xi \right) f\left( q^h_i(t,\xi) \right) \, d\xi,
\end{align}
and after rescaling, we define the last two terms as
\begin{subequations}
\begin{align}
   {F}^{(k)}_{p, i+1/2} :=&\ \varphi^{(k)}(\xi=+1) f^\downarrow(q^h(t,x_{i+1/2})), \\
   {F}^{(k)}_{m, i-1/2} :=&\ \varphi^{(k)}(\xi=-1) f^\downarrow(q^h(t,x_{i-1/2})).
\end{align}
\end{subequations}
With these definitions in place, 
the discrete weak formulation can now be compactly written as a large MOL
system:
\begin{equation} \label{eqn:1dweak}
   \frac{d}{dt} {Q}^{(k)}_i(t) = 
   \underbrace{\phantom{\frac{1}{1}}{N^{(k)}_i (t)}}_\text{Interior}
   - 
   \underbrace{\frac{1}{\dx} \left[ {F}^{(k)}_{p,i+1/2} (t)  - {F}^{(k)}_{m,i-1/2}
     (t) \right]}_\text{Edge}.
\end{equation}
The only remaining piece to describe is how we compute the inter-cell flux
values, $f^\downarrow( q^h(t, x_{i-1/2} ) )$.

\subsubsection{The discontinuous Galerkin (DG) method: a choice of Riemann
solvers}

Given that the representation of the solution, $q^h$, is not forced to
be continuous at the cell interfaces, care must be taken when
defining the flux values 
$f^\downarrow\left( q^h(t,x_{i-1/2})\right)$.
One could work with the so-called \emph{generalized Riemann solvers} which
take into account spatially varying function to the left and right of the
discontinuity. One could also work with exact solutions for a classical 
\emph{Riemann problem} in which one assumes constant states to the left 
and right of a discontinuity.
In this work, we use \emph{Rusanov's method} \cite{Rusanov61},  
which is an approximate 
Riemann solver 
that is commonly
called the \emph{local Lax-Friedrichs} 
(LLF) Riemann solver given its similarity to the Lax-Friedrichs method.
Much like the Lax-Friedrichs method, this solver
approximates each Riemann problem with two waves with equal speeds
traveling in opposite directions, but LLF uses a local speed in 
place of a global speed to do so.

Evaluating the basis functions to the left and right of a single interface
located at $x_{i-1/2}$ provides artificially constant states, defined by a
finite sum,
\begin{subequations}
\begin{align}
\label{eqn:rp_state1}
Q_r &=  Q^{(1)}_{i} - \sqrt{3} \, Q^{(2)}_{{i}}+ \sqrt{5} \, Q^{(3)}_{{i}}
- \sqrt{7} \, Q^{(4)}_{{i}}  + \cdots \\
\label{eqn:rp_state2}
Q_{\ell} &=  Q^{(1)}_{{i-1}} + \sqrt{3} \, Q^{(2)}_{{i-1}} + \sqrt{5} \,
Q^{(3)}_{{i-1}} + \sqrt{7} \, Q^{(4)}_{{i-1}}  + \cdots.
\end{align}
\end{subequations}
The LLF solver defines a single value based on these two constant states:
\begin{align}
   f^\downarrow\left( Q_l, Q_r \right) := 
   \frac{1}{2} \Bigl[ \left( f\left(Q_l\right) + f\left( Q_r\right) \right) - \alpha \left( Q_r - Q_l \right) \Bigr].
\end{align}
We use a local value of $\alpha$, defined by
$\alpha = \max \left\{ \left|s^1\right|, \left| s^2 \right| \right\}$, where
$s^1$ and $s^2$ are the HLL(E) speeds \cite{HaLaLe83} defined by
\begin{subequations}
\begin{align}
   s^1 &= \min \left\{ \min_p \lambda^{(p)}\left( \hat{Q} \right),\,  
           \min_p \lambda^{(p)}\left( Q_l \right) \right\}, \\
   s^2 &= \max \left\{ \max_p \lambda^{(p)}\left( \hat{Q} \right),\,  
           \max_p \lambda^{(p)}\left( Q_r \right) \right\},
\end{align}
\end{subequations}
and $\lambda^{(p)}$ are the eigenvalues of $f'$.
For this work, we take the simple arithmetic average 
$\hat{Q} = \frac{1}{2}\left( Q_l + Q_r \right)$, but
we point out that
Roe averages could also be used \cite{Roe81}.

This completes the DG method of lines (MOL) discretization for our PDE.
The only remaining part is to evolve the discrete coefficients through
time.
This is usually performed by explicit, high-order Runge-Kutta methods, 
resulting in a Runge-Kutta discontinuous Galerkin (RKDG) method.  
In principle, one may potentially work with this discretization to form a
multiderivative integrator, which would require taking a second derivative of
\eqref{eqn:1dweak}.
However, difficulty will ensue when trying to compute the Jacobian
of the complex system of ODEs in \eqref{eqn:discrete-weak-long}.
Instead,
we turn towards the Lax-Wendroff/Cauchy-Kowalewski discontinuous Galerkin
methods to define a discrete second derivative.

\subsection{The discontinuous Galerkin (DG) method: a multiderivative formulation }
\label{subsec:dg-md}

In this section, we extend the work of Qiu, Dumbser and Shu 
\cite{QiuDumbserShu05} to accommodate multiderivative technology.
An investigation into other fluxes
would be an interesting topic of future study, including a comparison of 
various approximate Riemann solvers using multiderivative 
technology \cite{Qiu07}; moreover, an investigation into 
generalized Riemann solvers \cite{DuMu06,DuBaDiToMuDi08,MoCaDuTo12} may yield
some interesting results.
At present, our current goal is to lay the foundation that would be
necessary for such investigations.

Starting with the aim of defining a single stage value (or full update) 
for a multiderivative integrator,
a DG implementation of equation
\eqref{eqn:md-stage} from 
\S\ref{subsec:ode-integrators-example}
requires the definition of $q_{,tt}$ as well as
the definition of $q^\star_{,tt}$.  
Here, we use $q_{,tt} = \left( f'(q) f_x \right)_x$ in place of 
$q_{,tt} = -\left( f'(q) q_t \right)_x$ which was used in \S\ref{subsec:weno-md}
for the second derivative.
After factoring out a single spatial derivative, we can
write \eqref{eqn:md-stage} as 
%
%
\begin{equation}\label{eqn:dg-stage}
   q = q^n - \dt \Bigl[ \alpha f^n + \alpha^* f^* - 
       \dt \beta f'(q^n) f^n_x - \dt \beta^* f'(q^*) f^*_x \Bigr]_{x},
\end{equation}
%
%
where we have made use of the shorthand definitions
\begin{align}
   f^n := f(q^n), \quad f^* := f(q^*).
\end{align}
%
%
We remark that after defining the appropriate spatial discretization,
equation \eqref{eqn:dg-stage} will look strikingly similar to 
the semi-discrete weak formulation already presented in equation
\eqref{eqn:1dweak}.

Before proceeding onward, we argue that a complete understanding of how
to discretize
\eqref{eqn:dg-stage} will provide the necessary
building block for arbitrary 
multiderivative Runge-Kutta integrators.
For example, setting $\alpha^* = \beta^* =0$,
we can view this as a single-step method, and therefore
equation \eqref{eqn:dg-stage} is nothing other
than the Lax-Wendroff DG scheme already presented in the literature, provided
the correct coefficients are inserted.
Moreover, setting $\beta = \beta^* = 0$, this produces a two-stage Runge-Kutta
method.  
Further stages can be included by adding in additional terms
which introduces cumbersome notation.
For clarity, we restrict our attention towards defining the discrete 
formulation of \eqref{eqn:dg-stage} which is general enough to 
accommodate more complicated multiderivative time integrators.

The first step is to construct a Galerkin representation of the flux function
$f^h$ as well as necessary coefficients for the second time derivative,
$g^h := f'(q^h) f(q^h)_x$.
These two functions are the first and second, respectively, time derivatives
of $q$ before taking the final spatial derivative.
The Galerkin coefficients for these spatial derivatives can be constructed
by taking advantage of the basis functions:

%
%
\subsection*{\bf Procedure 1.1}

\smallskip

\noindent INPUT:
\begin{align*}
   \left\{Q^{(k)}_i\right\} \quad - \quad 
       \text{a list of Galerkin coefficients of $q^h$. \phantom{and}}
\end{align*}
OUTPUT: 
\begin{align*}
   \left\{F^{(k)}_i\right\} & \quad - \quad
       \text{a list of Galerkin coefficients of $f(q^h)$, and} \\
   \left\{{G}^{(k)}_i\right\} & \quad - \quad
       \text{a list of Galerkin coefficients of $f'(q^h) f_x$.}
\end{align*}
\begin{enumerate}

\item For each element $\Tm_i$, evaluate the flux function,
$f_i( \xi ) := f( q^h_i( \xi ) )$ and the Jacobian
$J_i( \xi ) := f'( q^h_i(\xi ) )$
at a list of $M$ quadrature points, 
$\left\{ \xi_1, \ldots \xi_M\right\}$.
Here, $\xi$ is the canonical variable for grid element $\Tm_i$
related to $x$ through \eqref{eqn:canonical-transform}.
%
\item 
Compute the projection of $f_i(\xi)$ onto the basis functions using the point 
values from Step 1:
\begin{align}
   F^{(k)}_i &:= \Bigl\langle f_i\left( \xi \right), \,
                 \varphi_{\text{}}^{(k)}(\xi) \Bigr\rangle.
\end{align}
This projection step produces a Galerkin expansion of the 
flux function on grid element $\Tm_i$, that when written in the canonical 
variable is given by,
\begin{align}
 {f_i}( \xi )  :=
     \sum_{k=1}^{M}  F^{(k)}_{i}(t) \, \varphi_{\text{}}^{(k)}(\xi).
\end{align}
\item
Differentiate the flux function $f_i(\xi)$
on the interior of cell $\Tm_i$ to produce 
\begin{align}
   {\partial_x{f_i}} \left( \xi \right) := 
   \frac{2}{\dx} \sum_{k=1}^M F^{(k)}_i 
       {\frac{\partial \varphi}{\partial \xi}}^{(k)} \left( \xi \right).
\end{align}
Note that this differentiation step is only valid on the interior of each
cell.
Evaluate this function at the interior quadrature points, 
$\left\{ \xi_1, \xi_2, \ldots, \xi_M \right\}$.
%
\item
Define the Galerkin coefficients of $g^h$ as,
\begin{align}
       G^{(k)}_i := 
\Bigl\langle  J_i \left(
\xi
\right) \cdot \partial_x{f_i} \left( \xi \right), \,
\varphi_{\text{}}^{(k)}(\xi) \Bigr\rangle.
\end{align}
The required point values for the integration for $J_i(\xi_m)$ were 
computed in Step 1, 
and the required point
values of $\partial_{x} f_i( \xi_m )$ were computed in Step 3 by differentiating
the basis functions.
\end{enumerate}
We remark that differentiating the basis functions loses a single order of
accuracy, but given that those terms involving $g^h$ are
multiplied by $\Delta t = \BigO(\dx)$, we recover the desired order of accuracy.  
We are now prepared to describe the full multistage multiderivative DG method.

\subsection*{\bf Multistage multiderivative DG procedure}
\smallskip

\begin{enumerate}
\item
Given a pair of Galerkin expansions $q^h$ and ${q^*}$, we compute four
Galerkin expansions ${f^h}, {g^h}, {f^*}$ and $g^*$ using 
Procedure 1.1.
\smallskip

\item
Define a modified flux function $\tilde{f}^h$ via
\begin{equation} \label{eqn:modified-flux}
   \tilde{f}^h := \alpha f^h + \alpha^* f^* + 
       \dt \left( \beta g^h + \beta^* g^* \right).
\end{equation}
We remark that we have a full Galerkin expansion of this modified flux 
function, 
that when restricted to a single element, is given by
\begin{gather}
   \left. \tilde{f}^h(x) 
   \right \rvert_{\Tm_i} =
   \sum_{k=1}^M 
\tilde{F}^{(k)}_i \varphi^{(k)}( \xi ), \\
\tilde{F}^{(k)}_i := \alpha F^{(k)}_i + \alpha^* {F^*}^{(k)}_i 
       + \dt \left( \beta G^{(k)}_i + \beta^* {G^*}^{(k)}_i \right).
\end{gather}
We can now simplify equation \eqref{eqn:dg-stage} by compactly writing it as
\begin{equation} \label{eqn:dg-md-tilde}
   q = q^n - \dt \tilde{f^h}_{x}.
\end{equation}

\smallskip

\item
Construct the time integrated weak formulation by 
multiplying \eqref{eqn:dg-md-tilde} by a test
function $\varphi^{(k)}$ and integrate by parts over a grid cell
$\Tm_i$:
\begin{align}
{Q}^{(k)}_i(t) &= 
{Q}^{(k)}_i(t^{n}) +
\dt \tilde{N}^{(k)}_i (t^n)
   - \frac{\dt}{\dx} \left[ 
           \tilde{F}^{(k)}_{p,i+1/2}
       (t^n)  
   - {\tilde{F}_{m,i-1/2}}^{(k)} (t^n) \right].
\end{align}
The only remaining piece is to define the flux values, as well as define
a proper left and right flux value for the Riemann solver.
The interior integrals $\tilde{N}^{(k)}_i$ are given by integration of the 
Galerkin expansion of $\tilde{f}^h$:
\begin{align}
   \tilde{N}^{(k)}_i := \frac{1}{\dx} \int_{-1}^1
   \varphi^{(k)}_{\xi} \tilde{f}^h\left( \xi\right) \, d\xi.
\end{align}
Once we have the Galerkin expansion of $\tilde{f}^h$, these integrals can be
evaluated exactly.
\smallskip

\item
Solve Riemann problems for the modified flux function.
In place of using equations \eqref{eqn:rp_state1} and \eqref{eqn:rp_state2}
to define the left and right hand side $F_\ell$ and $F_r$ of the Riemann 
problem,
we insert the extra time derivatives drawn from $\tilde{f}^h$ and tuck them into 
this evaluation.
That is, at the grid interface located at $x_{i-1/2}$, we redefine the left
and right values of $f^h$ to be the left hand side and right hand side evaluations
of $\tilde{f}^h$:
\begin{subequations}
\begin{align}
F_\ell :=& \hat{f}^h( x_{i-1/2}^- ) 
   = \sum_{k=1}^M \tilde{F}^{(k)}_{i-1} \varphi^{(k)}(\xi=+1)
   = \sum_{k=1}^M \sqrt{2k-1} \tilde{F}^{(k)}_{i-1}, \\
F_r :=& \hat{f}^h( x_{i-1/2}^+ ) 
   = \sum_{k=1}^M \tilde{F}^{(k)}_{i } \varphi^{(k)}(\xi=-1),
   = \sum_{k=1}^M (-1)^k \sqrt{2k-1} \tilde{F}^{(k)}_{i}.
\end{align}
\end{subequations}
\end{enumerate}

This completes the multiderivative description of the method within the 
discontinuous Galerkin framework.  Extensions to multiderivative integrators
that require more stages or more derivatives are a tedious, 
yet straight-forward extension of
what has already been presented in this section together with 
the methods presented in \S\ref{subsec:ode-integrators-example} for ODEs.
Extensions to methods with more stages
would require adding more values of $\alpha^*$ and
$\beta^*$ to the definition of the modified flux function,
 $\tilde{f}^h$ in equation \eqref{eqn:modified-flux}.  
For example,
defining the modified flux function by
\begin{equation}
   \tilde{f}^h := \left( \alpha f^h + \alpha^* f^* + \alpha^{**} f^{**} \right)
       + \dt \left( \beta g^h + \beta^* g^* + \beta^{**} g^{**} \right)
\end{equation}
allows us to use any ODE solver derived from \eqref{eqn:md-stage-td-3stage}
in \S\ref{subsec:ode-integrators-example}, such as the fifth-order
two-derivative method presented in \eqref{eqn:tdrk5}.

Extensions to methods with more derivatives
require bootstrapping previous Lax-Wendroff DG work 
\cite{QiuDumbserShu05}, much like
what has already been performed here.
This would involve
the appropriate extension of
Procedure 1.1
using the Cauchy-Kowalewski procedure from equation 
\eqref{eqn:cauchy-kowalewski},
which would define an appropriate modification
$\tilde{f}$ of the flux function $f$ that contains extra time information
of the PDE.

\subsection{The discontinuous Galerkin (DG) method: a choice of limiters}
\label{subsec:dg-limiters}

The one detail that has been left out of this discussion has been the choice
of limiting options.
It is a well known fact that high-order linear
methods exhibit oscillatory behavior
near discontinuities, and therefore to obtain physically relevant results,
one needs to choose a limiter that ideally retains high order accuracy
in smooth regions, and reduces to first order accuracy locally at the 
location of the shock.
In this work, we use the moment based limiter developed by Krivodonova 
\cite{Krivodonova07} for our numerical simulations, although other limiters
may certainly be used as well, with no change to the general framework presented
thus far.
This limiter is applied after each stage of the (multiderivative) 
Runge-Kutta method, and it modifies the higher order terms 
$\left\{Q^{(k)}_i, k \geq 2\right\}$ after each time step.
One advantage of using multiderivative methods is that expensive limiters need
to be applied less often when compared to high-order, single-derivative
counterparts because of the reduction in the number of stages.

\section{Numerical examples}
\label{sec:numerical_examples}
   
In this section, we present results of the proposed method on a variety 
of hyperbolic conservation laws.
In \S\S\ref{subsec:linadv} -- \ref{subsec:buckley}, we consider 
scalar examples: constant coefficient advection, 
and the Buckley-Leverett two-phase flow model.
In \S\S\ref{subsec:shallow} -- \ref{subsec:euler}, we 
present results for the shallow water and Euler equations.
All discontinuous Galerkin solutions were run using the
open-source software DoGPack \cite{dogpack}.

Unless otherwise noted, we use the following parameters:
\begin{itemize}
   \item The time integrator is the unique two-stage, two-derivative, fourth-order
   Runge-Kutta method (TDRK4) from equation \eqref{eqn:exp4}.
   \item All finite difference WENO simulations use the fifth order WENO5-Z
   reconstruction from \S\ref{sec:weno} and a constant CFL number of 
   $\nu = 0.4 = \max_{q^*} \left|f'(q^*)\right| \frac{\dt}{\dx}$.
   \item Every DG result is fourth-order accurate in space, 
   and uses a desired CFL number of $\nu = 0.08$.
   If the maximum allowable CFL number defined by
   $\nu_\text{max} = 0.085$ is violated, a smaller time step is chosen.
   For visualization, we plot exactly $4$ uniformly spaced points per grid
   cell.
\end{itemize}
For the two problems where other time integrators are compared
against the TDRK4 method, we use coefficients in 
Table \ref{table:dg-cfl} for the DG simulations.

\begin{table}
\normalsize
\centering
\caption{CFL parameters used for DG simulations that compare time integrators.
The SSP-RK3 method is the optimal third order SSP method developed by Shu and Osher
\cite{ShuOsher87}
and described by 
Gottlieb and Shu \cite{GoShu98}.  SSP-RK4 is 
a fourth-order, low-storage method with ten stages developed by 
Ketcheson \cite{Ke08}.
The maximum allowable CFL number, $\nu_\text{max}$ is near the maximum 
possible stable time step that each method permits for fourth-order spatial
accuracy.
We note that SSP-RK4 has a much higher CFL limit when compared to either
TDRK4 or SSP-RK3 because it incorporates many more stages, but
each stage requires expensive applications of limiters.
It would be interesting to investigate optimized versions of TDRK4 
using \eqref{eqn:md-stage} as a building block that
allow taking larger time steps without adding extra storage.
\label{table:dg-cfl}
}
 \begin{tabular}{|l||c|c|}
 \hline
 {\normalsize \bf \text{DG time stepping parameters} } & 
 {\normalsize \text{$\nu$ } } & 
 {\normalsize \text{$\nu_\text{max}$ } } \\
   \hline \hline
{\normalsize \text{Third-order (SSP-RK3) }    } & 0.125  & 0.130  \\
\hline
{\normalsize \text{Low-storage SSP Runge-Kutta (SSP-RK4) }    } & 0.44  & 0.45  \\
\hline
{\normalsize \text{Two-derivative Runge-Kutta (TDRK4) }  } & 0.08  & 0.085 \\
\hline
\end{tabular}

\end{table}

\subsection{Linear advection}
\label{subsec:linadv}

Our first examples are variations on the scalar linear advection equation with 
periodic boundary conditions:
\begin{equation}\label{eqn:advection}
   q_{,t} +  q_{,x} = 0, \quad x \in [-1,1].
\end{equation}

\subsubsection{Linear advection: a smooth example}

For a smooth example, we use initial conditions
\begin{equation}\label{eqn:advection.test_smooth.ICs}
   q(0,x) = q_0(x) = \sin(\pi x),
\end{equation}
and we run the simulation up to $t=2.0$,
at which point the exact solution is given by the initial conditions.
This is one of two problems where we compare popular time integrators
against the new method.  Convergence studies are presented in 
Tables \ref{table:linear_smooth-weno} and \ref{table:linear_smooth-dg}.
For the WENO scheme and for this problem only, 
we run this problem with a large CFL number of $\nu=0.9$ 
in order to make the temporal error the preponderant part of the total
error; 
accordingly, in Table 1 we observe the fourth-order time accuracy of 
RK4 and TDRK4.

Relative errors for the WENO scheme are defined by an
$L^2$ norm based on point-wise values:
\begin{align}
   \text{Error } := 
   \frac{ \sqrt{ \dx \sum_{i=1}^{m_x} \left( q^n_i - q(t^n, x_i) \right)^2 } }{ 
           \sqrt{ \dx \sum_{i=1}^{m_x} q(t^n, x_i) ^2  } }.
\end{align}
Errors for the DG simulations likewise use a relative $L^2$-norm, but this
can be based on integration against the exact solution using
moments of the solution (c.f. \cite{RoSe10}).

\begin{table}
\centering
\normalsize
\caption{Advection equation: smooth example.
Convergence analysis for the WENO scheme applied to 
the advection equation \eqref{eqn:advection} 
with periodic boundary conditions and smooth initial conditions 
\eqref{eqn:advection.test_smooth.ICs}.
The table shows a comparison of the relative $L^2$-norm of the errors in the 
numerical solutions obtained with WENO5-Z spatial discretization and
different time integrators: 3rd-order strong stability preserving Runge-Kutta 
(SSP-RK3) \cite{ShuOsher87}, classical 4th-order Runge-Kutta (RK4), and the
new two-derivative 4th-order 
Runge-Kutta (TDRK4) method.
The Courant parameter chosen for this problem was a constant 
CFL of $\nu = 0.9 = { \Delta t } / { \Delta x}$.
In order to observe fourth-order accuracy for the fourth-order methods,
we needed to increase the CFL number and therefore increase the temporal error.
Results for smaller CFL numbers reached machine precision before announcing
their accuracy, where
both RK4 and TDRK4, indicated convergence orders between 
$4^{th}$ and $5^{th}$ order.
%
The `Order' columns refer to the algebraic order of convergence, computed as 
the base-2 logarithm of the ratio of two successive error norms.
We remark that for all CFL numbers and weighting schemes tested,
including WENO-Z, WENO-JS and linear weights, 
both time integrators RK4 and TDRK4 have comparable errors.
\label{table:linear_smooth-weno}}
\begin{tabular}{|r||c|c||c|c||c|c|}
\hline
\bf{Mesh} & \bf{SSP-RK3} & \bf{Order} & \bf{RK4} & \bf{Order} & \bf{TDRK4} & \bf{Order}\\
\hline
\hline
$  25$ & $3.09\times 10^{-03}$ & --- & $2.00\times 10^{-04}$ & --- & $1.52\times 10^{-04}$ & ---\\
\hline
$  50$ & $3.79\times 10^{-04}$ & $3.03$ & $9.68\times 10^{-06}$ & $4.37$ & $8.77\times 10^{-06}$ & $4.11$\\
\hline
$ 100$ & $4.74\times 10^{-05}$ & $3.00$ & $5.54\times 10^{-07}$ & $4.13$ & $5.39\times 10^{-07}$ & $4.02$\\
\hline
$ 200$ & $5.92\times 10^{-06}$ & $3.00$ & $3.37\times 10^{-08}$ & $4.04$ & $3.35\times 10^{-08}$ & $4.01$\\
\hline
$ 400$ & $7.39\times 10^{-07}$ & $3.00$ & $2.09\times 10^{-09}$ & $4.01$ & $2.09\times 10^{-09}$ & $4.00$\\
\hline
$ 800$ & $9.24\times 10^{-08}$ & $3.00$ & $1.31\times 10^{-10}$ & $4.00$ & $1.31\times 10^{-10}$ & $4.00$\\
\hline
$1600$ & $1.16\times 10^{-08}$ & $3.00$ & $8.33\times 10^{-12}$ & $3.97$ & $8.33\times 10^{-12}$ & $3.97$\\
\hline
\end{tabular}

\end{table}

\begin{table}
\centering
\normalsize
\caption{Advection equation: smooth example.
We present DG-results comparing the new multiderivative scheme TDRK4 against 
third-order SSP-RK3 \cite{ShuOsher87,GoShu98} and a state of the art
low-storage fourth-order Runge-Kutta method (SSP-RK4) \cite{Ke08}.
The CFL numbers chosen for each scheme are presented in Table \ref{table:dg-cfl},
which are near the maximum stable CFL limit for each scheme.
\label{table:linear_smooth-dg}}
\begin{tabular}{|r||c|c||c|c||c|c|}
\hline
\bf{Mesh} & \bf{SSP-RK3} & \bf{Order} & \bf{SSP-RK4} & \bf{Order} & \bf{TDRK4} & \bf{Order}\\
\hline
\hline
$   5$ & $1.17\times 10^{-03}$ & --- & $6.18\times 10^{-04}$ & --- & $8.21\times 10^{-04}$ & ---\\
\hline
$  10$ & $1.32\times 10^{-04}$ & $3.16$ & $3.87\times 10^{-05}$ & $4.00$ & $5.99\times 10^{-05}$ & $3.78$\\
\hline
$  20$ & $1.60\times 10^{-05}$ & $3.04$ & $2.43\times 10^{-06}$ & $3.99$ & $3.91\times 10^{-06}$ & $3.94$\\
\hline
$  40$ & $1.99\times 10^{-06}$ & $3.01$ & $1.52\times 10^{-07}$ & $4.00$ & $2.47\times 10^{-07}$ & $3.98$\\
\hline
$  80$ & $2.48\times 10^{-07}$ & $3.00$ & $9.51\times 10^{-09}$ & $4.00$ & $1.55\times 10^{-08}$ & $4.00$\\
\hline
$ 160$ & $3.10\times 10^{-08}$ & $3.00$ & $5.94\times 10^{-10}$ & $4.00$ & $9.69\times 10^{-10}$ & $4.00$\\
\hline
$ 320$ & $3.87\times 10^{-09}$ & $3.00$ & $3.72\times 10^{-11}$ & $4.00$ & $6.03\times 10^{-11}$ & $4.01$\\
\hline
$ 640$ & $4.84\times 10^{-10}$ & $3.00$ & $2.24\times 10^{-12}$ & $4.05$ & $4.39\times 10^{-12}$ & $3.78$\\
\hline
\end{tabular}

\end{table}

\subsubsection{Linear advection: a challenging example}
The second example we test is more challenging.
The initial conditions are a 
combination of four bell-like shapes with different degrees of smoothness, 
that was originally proposed by Jiang and Shu \cite{JiangShu96}:
\begin{subequations}
\begin{align}
\label{eqn:square_wave_ic-a}
 &q_0(x) = 
 \begin{cases}
   \frac{1}{6} \left[
   G(x,\beta,z-\delta) + G(x,\beta,z+\delta) + 4 G(x,\beta,z) \right],
                  & \quad -0.8 \le x \le -0.6; \\
   1,             & \quad -0.4 \le x \le -0.2; \\
   1-|10(x-0.1)|, & \quad  0   \le x \le  0.2; \\
   \frac{1}{6} \left[
   F(x,\alpha,a-\delta) + F(x,\alpha,a+\delta) + 4 F(x,\alpha,a) \right],
                  & \quad  0.4 \le x \le  0.6; \\
   0,             & \quad \text{otherwise}.
 \end{cases}\\
 \label{eqn:square_wave_ic-b}
 &G(x,\beta ,z) = e^{-\beta(x-z)^2};\\
   \label{eqn:square_wave_ic-c}
 &F(x,\alpha,a) = \sqrt{\max(1-\alpha^2(x-a)^2,0)}.
\end{align}
\end{subequations}
%
The constants are $a=0.5$, $z=-0.7$, $\delta=0.005$, $\alpha=10$ and 
$\beta = \log_{10}(2)/(36\,\delta^2)$.
With this choice of constants, the Gaussian curve centered
at $x=-0.7$ contains two discontinuities, and the bump
centered at $x=0.5$ also has two discontinuities.
Moreover, this last bump has no derivative at $x=0.405$ and
$x=0.595$.

After testing many different reconstruction procedures of varying orders,
many different time integrators and a large range of parameters,
our observations indicate that all WENO methods are sensitive to the 
parameters used for this test problem.
It has already been observed that WENO schemes may require tuning
in order to achieve good results \cite{BoCaCoWai08};
this is not the subject of this work, but we find it necessary to point
out that WENO simulations do not emulate robust behavior for this problem.
A numerical investigation into their behavior on linear problems with difficult
initial conditions such as \eqref{eqn:square_wave_ic-a}-\eqref{eqn:square_wave_ic-c}
would be interesting.

Results for this problem are presented in Figure \ref{fig:advection}.
The under-resolved DG simulations tend to be quite diffusive when 
compared to the WENO schemes, but behave much more predictably.

\begin{figure}[!htb]
\centering
\includegraphics[width=\textwidth]{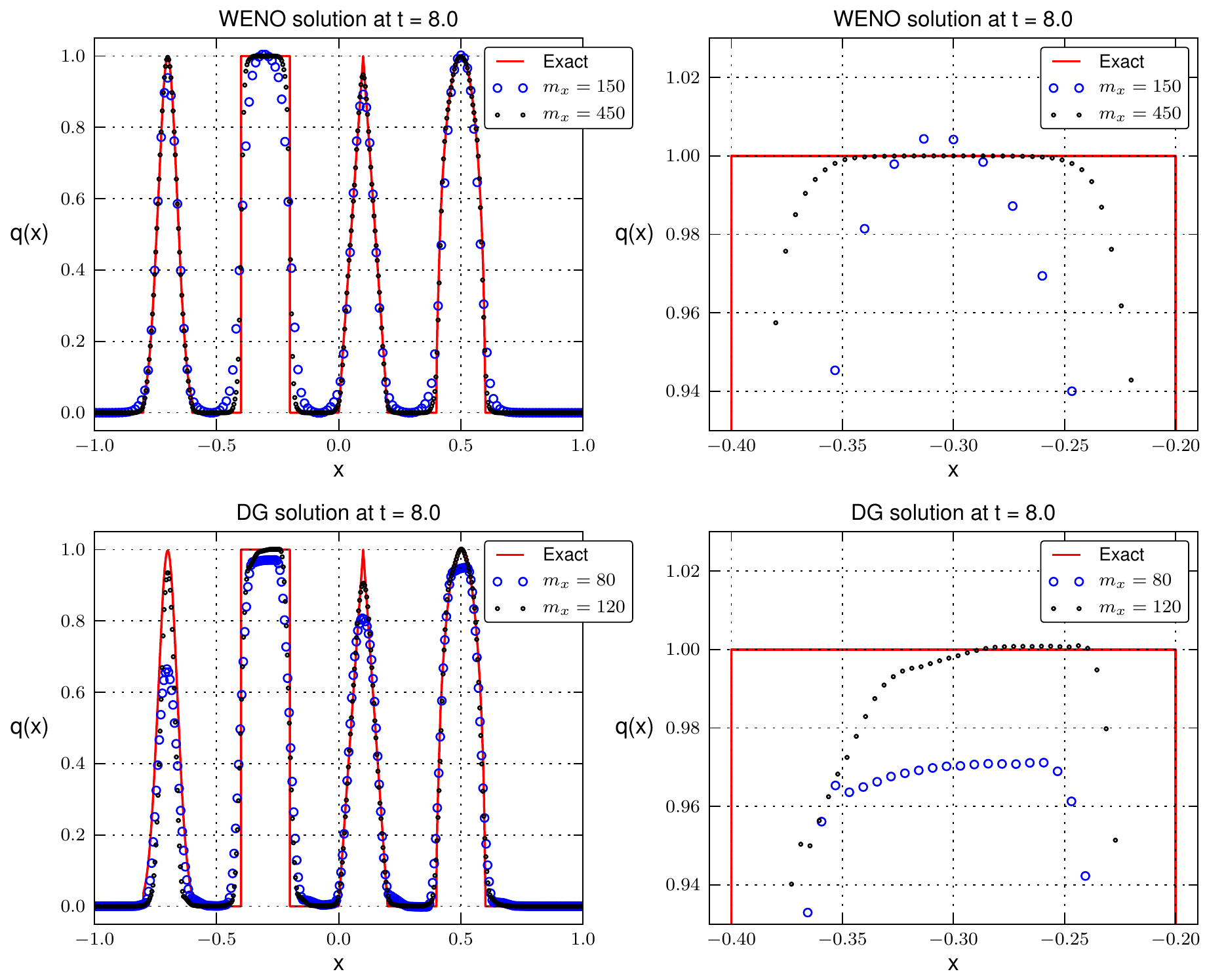}
\caption{Advection equation: a discontinuous example.
Results for the two-derivative (TDRK4) method for WENO and DG spatial 
discretizations.
Simulations are run to a final time of $t = 8.0$ for a total of four full
revolutions.
The two images on the right are zoomed in images of the top of the square
wave.
WENO simulations use a CFL number of $\nu = 0.4$, and the DG simulations
use a constant CFL number of $\nu = 0.08$.
We again plot four points per cell for the DG solutions.
\label{fig:advection}
}
\end{figure}

\subsection{Buckley-Leverett}
\label{subsec:buckley}

The Buckley-Leverett equation is a non-linear scalar problem with a non-convex
flux function.
Years past its invention, 
it has become a standard benchmark problem for many hyperbolic
solvers. 
The model describes two-phase flow through 
porous media, where the application is oil recovery from a reservoir 
containing an oil-water or oil-gas mixture of fluids \cite{BuckLev42}.
In rescaled units, the flux function for this problem is defined through a 
single free parameter $M$ with
\begin{align}
   f(q) = \frac{q^2}{q^2+M(1-q)^2}.
\end{align}
We use $M = 1/3$, and we take the computational domain to be
$[-1,1]$ with initial conditions prescribed through
\begin{align}
   q(0,x) = 
   \begin{cases}
   1, \quad \text{if } -1/2 \leq x \leq 0, \\
   0, \quad \text{otherwise}.
   \end{cases}
\end{align}

For small time values, the exact solution to this problem can be found by 
solving two Riemann problems,
where each pair of states 
produces a typical `compound wave'
that is the combination of a leading shock and a trailing rarefaction.
Each shock propagates with a speed dictated by
the Rankine-Hugoniot conditions
\begin{gather}
   f'( q^* ) = \frac{ f(q^* ) - f( q_c ) }{ q^* - q_c },
\end{gather}
where $q_c$ is the constant value that does not change.
For the Riemann problem located at $x=-0.5$, we have 
$q_c =1$, and  $q^* \approx 0.1339745962155613$, and for the
Riemann problem located at $x=0.0$, we have $q_c = 0$ and
$q^* = 0.5$.
Characteristics between $q^*$ and $q_c$ propagate with speed $f'(q)$,
and therefore to fill the rarefaction fan, we plot a range of values
$( s + t f'(q), q )$, where $q$ is a sampling of values between $q_c$ and 
$q^*$, and $s \in \left\{-0.5,0.0 \right\}$ is the location of each
Riemann problem.
Results for the two spatial discretizations 
are presented in Figures 
\ref{fig:buckley-leverett-weno} and
\ref{fig:buckley-leverett-dg}
at a final time of $t=0.4$.
In addition, we compare three different two-derivative methods 
that were presented
in \S\ref{subsec:ode-integrators-example} in 
Figure \ref{fig:buckley-leverett-weno-tdrk-comp}.

As a final note, we remark that for this problem only, we modify our scheme
to deal with some pathological issues.
Given that the flux function is non-convex, the WENO simulations
use the analytical value for $\alpha$, which is approximately
\mbox{$\alpha := \max_{q\in [0,1]} |f'(q)| \approx 2.205737062$}.
For the DG simulations, we use the HLL(E) Riemann solver \cite{HaLaLe83}.
In this case, the HLL(E) solver performs better than the local Lax-Friedrichs
(LLF) solver given that $f'(q) \geq 0$ for all $q$ in our domain. 
The LLF solver approximates this problem with
two waves traveling in opposite directions, whereas the exact solution 
travels in one direction only, which the HLL(E) correctly accounts for.

\begin{figure}[!htb]
\centering
\includegraphics[width=\textwidth]{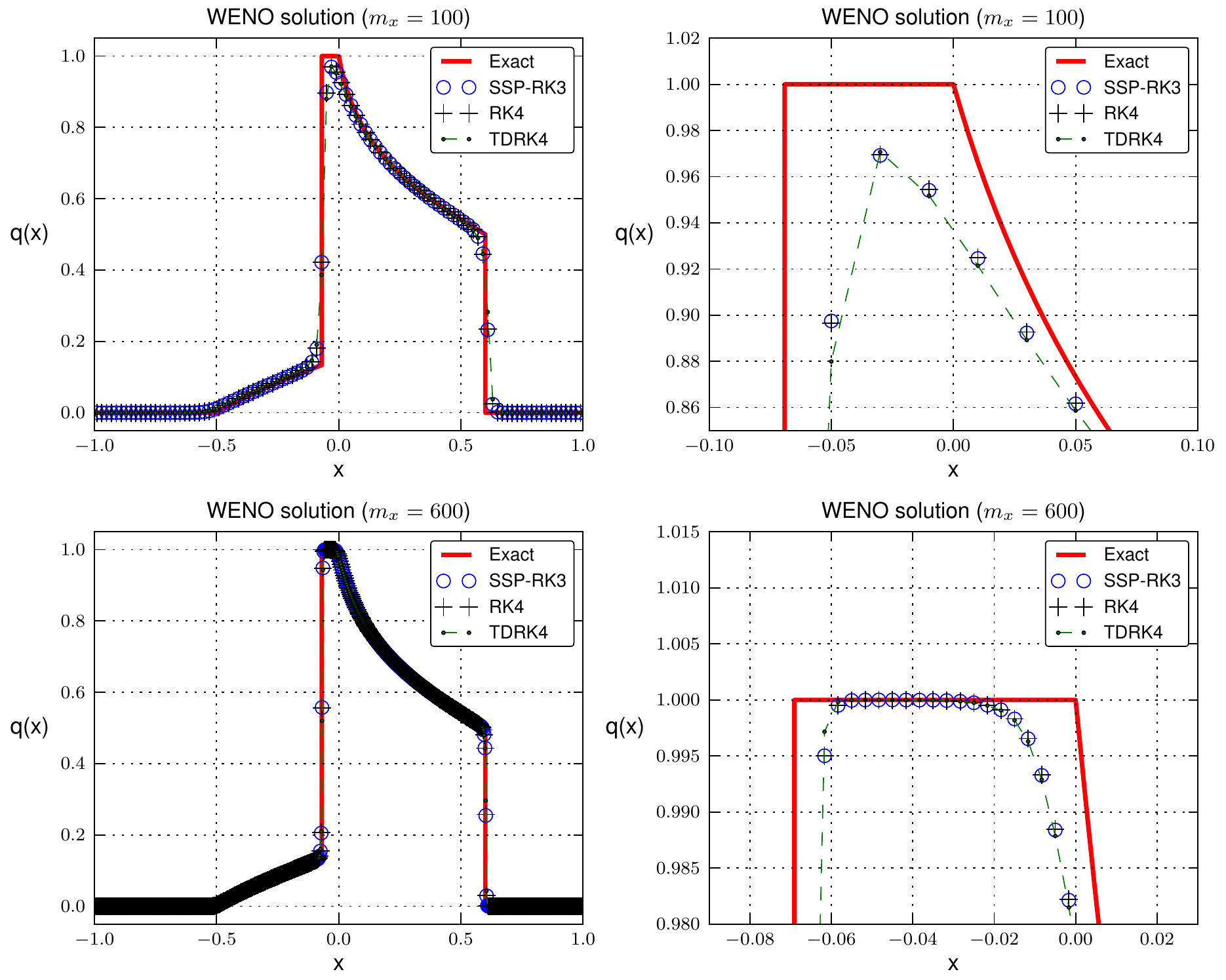}
\caption{Buckley-Leverett double Riemann problem: WENO solutions.
WENO results at the final time of $t=0.4$.
All time integrators compared for this method are giving similar results.
We remark that for this problem, the WENO schemes tend to be more diffusive
where the rarefactions form when compared to the DG schemes.
}
\label{fig:buckley-leverett-weno}
\end{figure}

\begin{figure}[!htb]
\centering
\includegraphics[width=\textwidth]{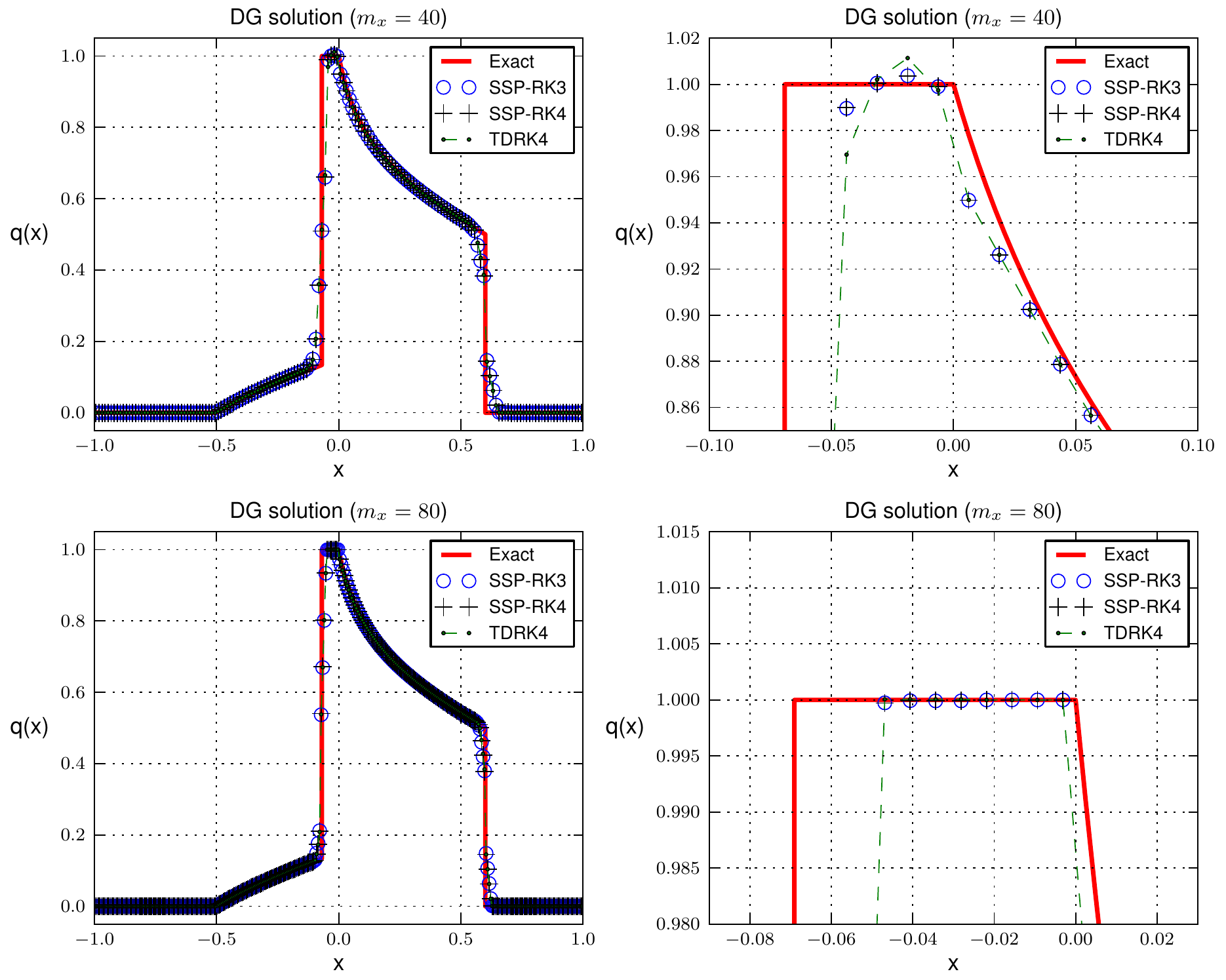}
\caption{Buckley-Leverett double Riemann problem: DG solutions.
DG results at the final time of $t=0.4$.
We plot exactly four uniformly spaced grid points per cell.
The top row uses $m_x = 40$ grid cells, and the bottom row uses
$m_x = 80$ grid cells.
The two frames on the right are zoomed in images of the solution near $x=0.0$, 
one of the few places where we were able to find
differences in the solutions.
We note that the coarse solution is under resolved because it 
only has a single cell to capture 
the structure at the top of the solution.
CFL numbers are chosen as in Table \ref{table:dg-cfl}, which
are close to the maximum stable time step allowed.
All time integrators are qualitatively giving the same result.
}
\label{fig:buckley-leverett-dg}
\end{figure}

\begin{figure}[!htb]
\centering
\includegraphics[width=\textwidth]{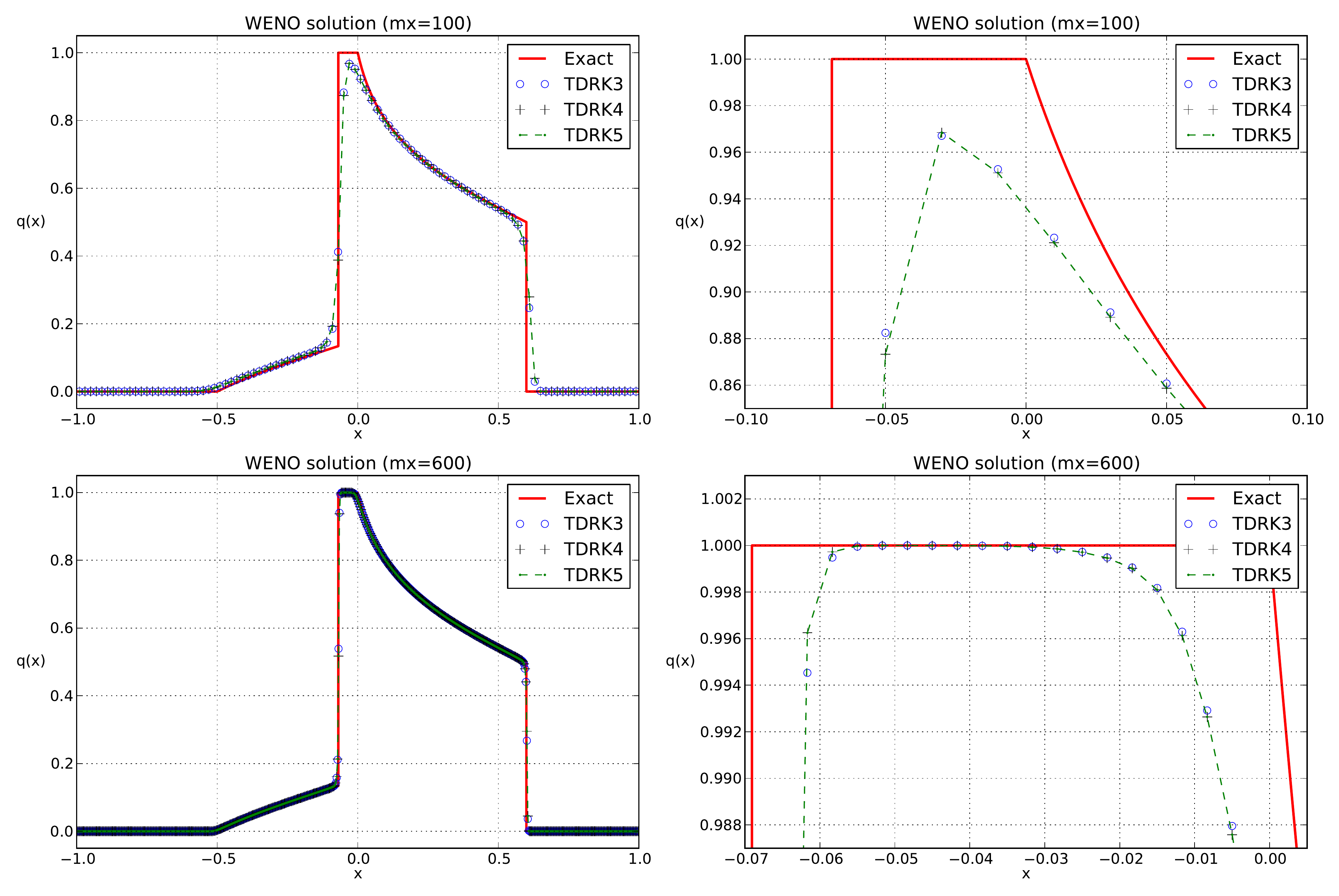}
\caption{Buckley-Leverett double Riemann problem: WENO solutions.
WENO results at the final time of $t=0.4$.
Here, we compare three different two-derivative methods that are presented
in \S\ref{subsec:ode-integrators-example}:
TDRK3 \eqref{eqn:tdrk3}, TDRK4 \eqref{eqn:exp4} and TDRK5 \eqref{eqn:tdrk5}.
The only noticeable difference is given by the third-order scheme,
and we attribute this to the lower order temporal accuracy.
}
\label{fig:buckley-leverett-weno-tdrk-comp}
\end{figure}

\subsection{Shallow water}
\label{subsec:shallow}

The shallow water equations define a hyperbolic system with two conserved
quantities: the water height $h$, and velocity $u$.
The system is defined by
\begin{align}
\left( 
   \begin{array}{c}
       h \\ hu 
   \end{array}
\right)_{, t}
+ 
\left( 
   \begin{array}{c}
       hu \\ h u^2 + \frac{1}{2} g h^2
   \end{array}
\right)_{, x}
= 0.
\end{align}
For our simulation, we take $g=1$.
We demonstrate our method on the dam break Riemann problem \cite{bLe02} 
with initial conditions defined by
\begin{align}
   (h, u )^T = 
   \begin{cases}
   (3,0)^T, \quad \text{if }  x \leq 0.5, \\
   (1,0)^T, \quad \text{otherwise}.
   \end{cases}
\end{align}
We use a computational domain of $[0,1]$ with outflow boundary conditions,
and stop the simulation at a final time of $t = 0.2$.
Results for the two-derivative time integrators are presented in Figure
\ref{fig:shallow-dam-weno} for the WENO method, and in 
Figure \ref{fig:shallow-dam-dg} 
for the discontinuous Galerkin scheme.
Exact solutions for this problem can be found in textbooks \cite{bLe02}.

\begin{figure}[!htb]
\centering
\includegraphics[width=\textwidth]{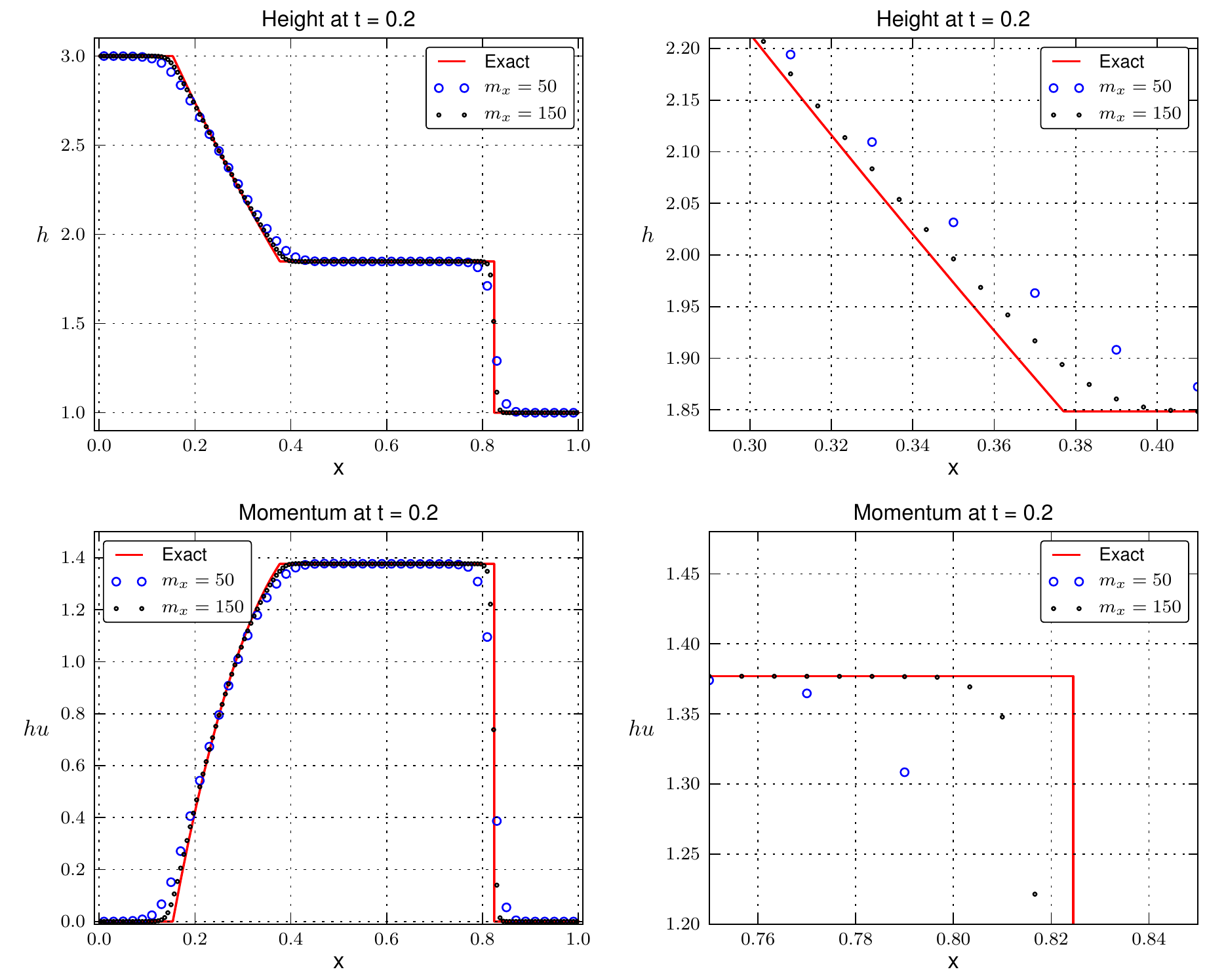}
\caption{Shallow water Riemann problem: WENO solutions.
Here we present two resolutions on top of each other using the new
multiderivative scheme presented in \S\ref{subsec:weno-md}.
We use $m_x = 50$ points for a coarse resolution and $m_x=150$ points
for a finer solution.
Top row: water height $h$ at the final time
with a zoom in of the rarefaction to the right.
Bottom row: momentum $hu$, with a zoom in of the shock to the right.
}
\label{fig:shallow-dam-weno}
\end{figure}

\begin{figure}[!htb]
\centering
\includegraphics[width=\textwidth]{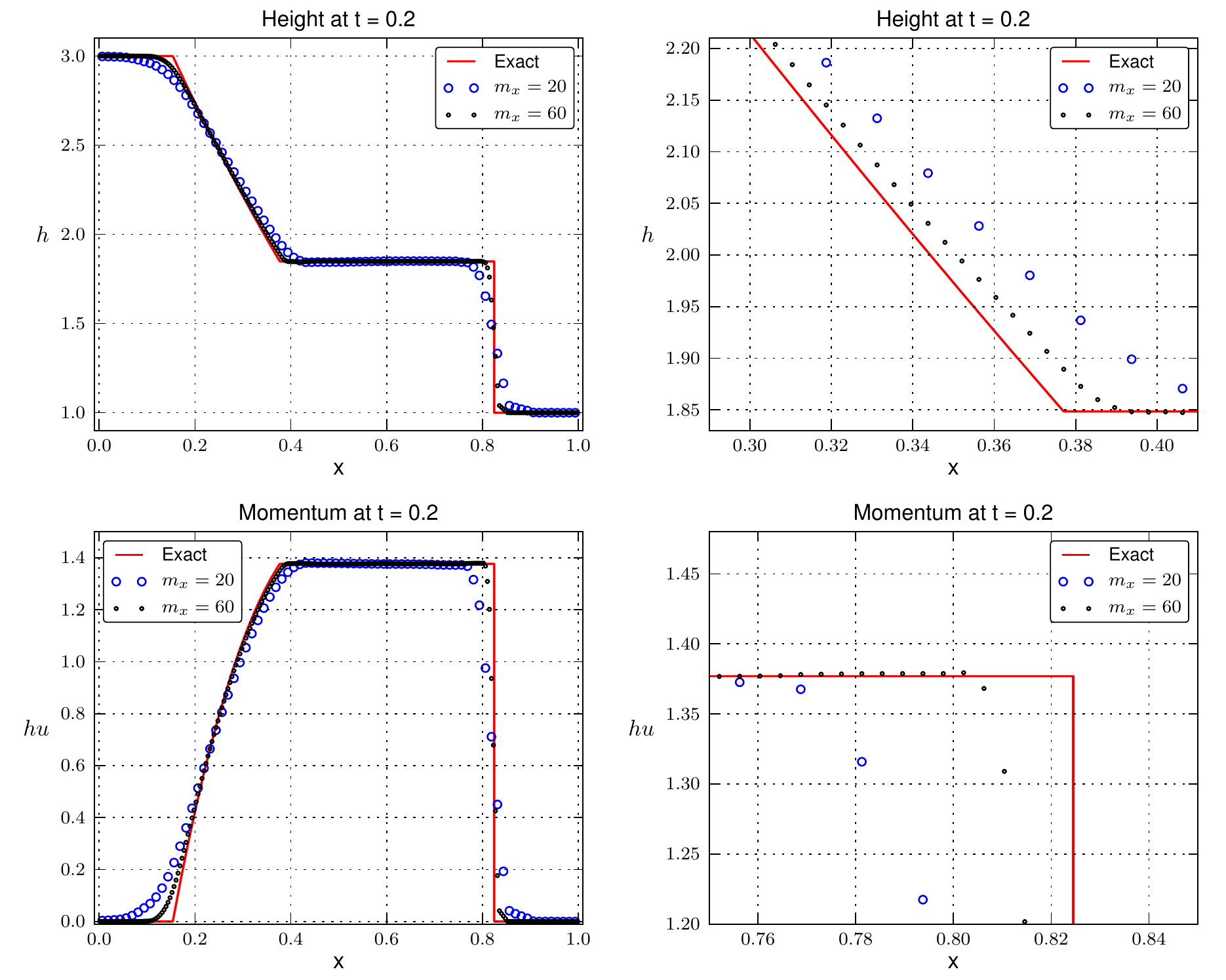}
\caption{Shallow water Riemann problem: DG solutions.
Here we present two resolutions on top of each other using the new
multiderivative scheme presented in \S\ref{subsec:dg-md}.
We use $m_x = 20$ cells for a coarse resolution and $m_x=60$ cells for a
finer solution, and we plot exactly four uniformly spaced points per
cell.
Top row: water height $h$ at the final time
with a zoom in of the rarefaction to the right.
Bottom row: momentum $hu$, with a zoom in of the shock to the right.
}
\label{fig:shallow-dam-dg}
\end{figure}

\subsection{Euler equations}
\label{subsec:euler}
The Euler equations describe the evolution of density $\rho$, momentum 
$\rho u$ and energy $\E$ of an ideal gas:
\begin{align}
\left( 
\begin{array}{c}
\rho \\ \rho u \\ \E
\end{array}
\right)_{, t}
+ 
\left( 
\begin{array}{c}
   \rho u \\ \rho u^2 + p \\ ( \E + p ) u
\end{array}
\right)_{, x}
= 0,
\end{align}
where $p$ is the pressure.
The energy $\E$ is related to the primitive variables $\rho$, $u$ and $p$ by
\begin{align}
   \E = \frac{p}{ \gamma-1 } + \frac{1}{2}\rho u^2,
\end{align}
where $\gamma$ is the ratio of specific heats.  For all of our simulations,
we take $\gamma = 1.4$.

%

\subsubsection{Euler equations: a shock tube Riemann problem}


We present a classic test case of a difficult 
shock tube, which is commonly referred
to as the \emph{Lax shock tube}.
The initial conditions are those defined by Harten 
\cite{Harten78,HaEnOsCh87}:
\begin{align}
\label{eqn:shock-harten}
   (\rho, \rho u, \E )^T = 
   \begin{cases}
   (0.445, 0.3111, 8.928)^T, \quad \text{if } x \leq 0.5, \\
   (0.5, 0, 1.4275)^T, \quad \text{otherwise}.
   \end{cases}
\end{align}
Exact solutions for this problem are well
understood, and there are many textbooks
that describe how to construct them \cite{bLe02,Toro99}.
For this set of data, the solution contains 
a left rarefaction, a contact
discontinuity and a shock wave traveling to the right.

We select $t = 0.16$ for the final time of our simulation 
\cite{QiuShu03}, and we use a computational domain of $[0,1]$
with outflow boundary conditions.
Results for this problem are presented in Figures 
\ref{fig:euler-shock-tube-harten-weno}
and 
\ref{fig:euler-shock-tube-harten-dg}.
For the WENO simulations, we additionally compare the two-derivative
two-derivative time integrator TDRK4 against the SSP-RK3
with two different CFL numbers, $\nu = 0.01$ and then $\nu = 0.4$.
We present results for the time integrator comparison in Figure
\ref{fig:euler-shock-tube-harten-compare-weno}.

\begin{figure}[!htb]
\centering
\includegraphics[width=\textwidth]{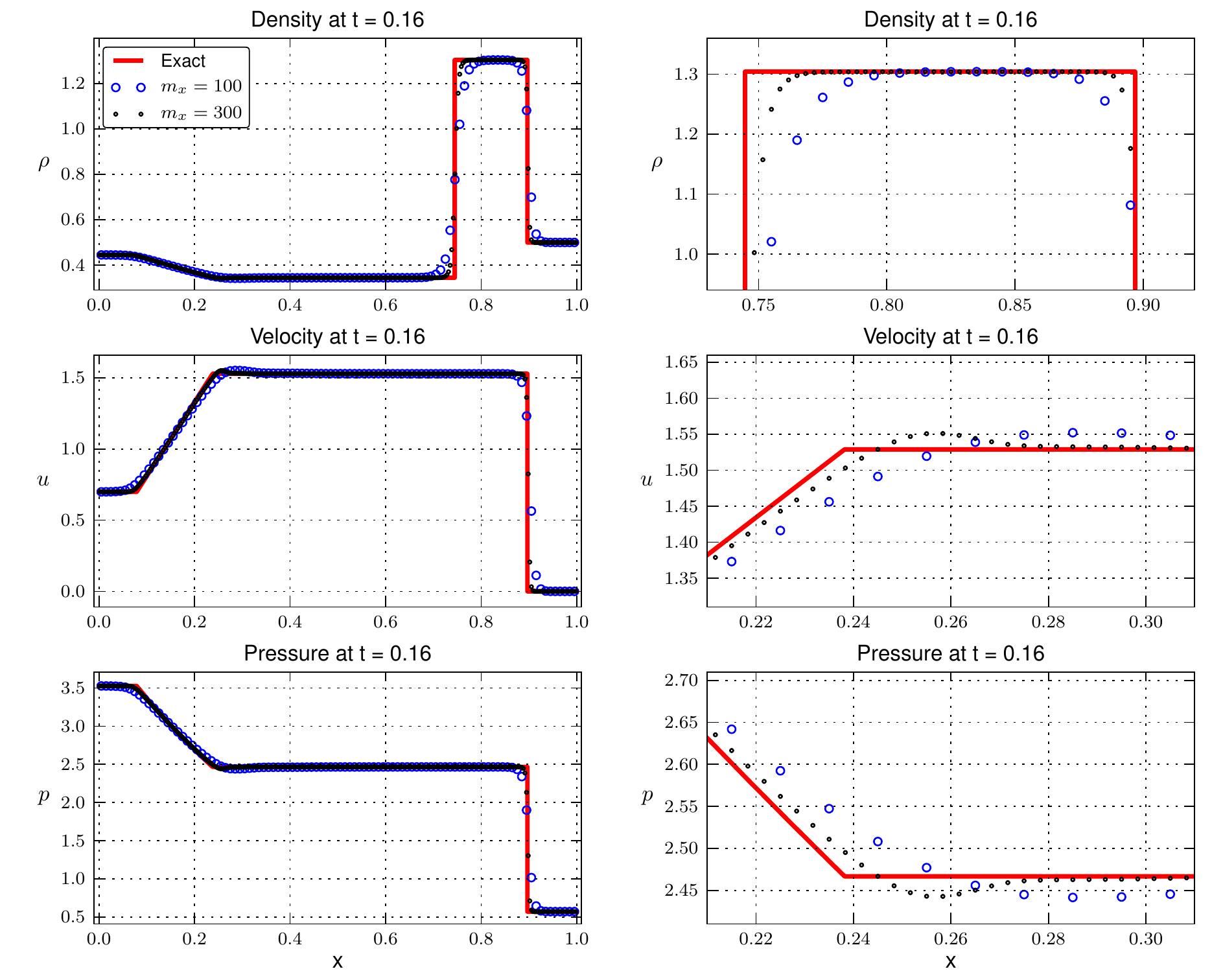}
\caption{Shock-tube Riemann problem: WENO solutions.
Shown here are WENO simulations with Harten's initial conditions  \eqref{eqn:shock-harten}
for the shock tube problem.
We plot observable quantities from top to bottom: 
density $\rho$, 
velocity $u$,
and 
pressure $p$.
Left columns are the full solution, and right columns are zoomed in parts
of the same data points.
We present a coarse solution with $m_x = 100$ points and a finer solution
of $m_x = 300$ points.
The zoomed in image of the density indicates the top part of the square section
between the contact discontinuity and the right traveling shock.
The zoomed in images for the velocity and pressure focus in on the 
right foot of the rarefaction fan.
}
\label{fig:euler-shock-tube-harten-weno}
\end{figure}

\begin{figure}[!htb]
\centering
\includegraphics[width=\textwidth]{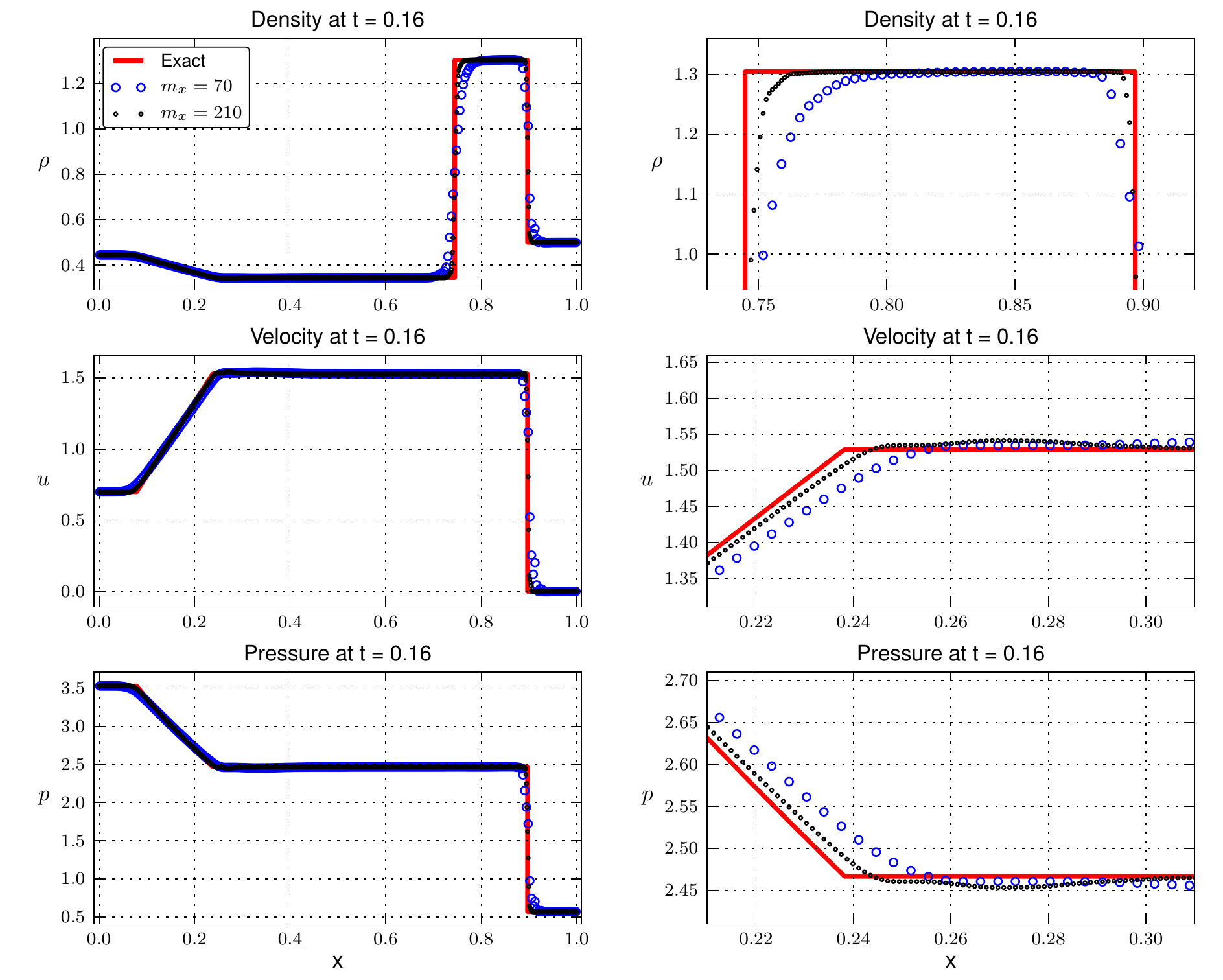}
\caption{Shock-tube Riemann problem: DG solutions.
We present two DG simulations for the physical
observables, which from top to bottom
are:
density $\rho$, 
velocity $u$,
and 
pressure $p$.
Left columns are the full solution, and right columns are zoomed in parts
of the same data points.
We present a coarse solution with $m_x = 70$ grid cells and a finer solution
of $m_x = 210$ grid cells, and we plot four uniformly spaced points per cell.
The axes are identical to those in
Figure 
\ref{fig:euler-shock-tube-harten-weno}.
}
\label{fig:euler-shock-tube-harten-dg}
\end{figure}

\begin{figure}[!htb]
\centering
\includegraphics[width=\textwidth]{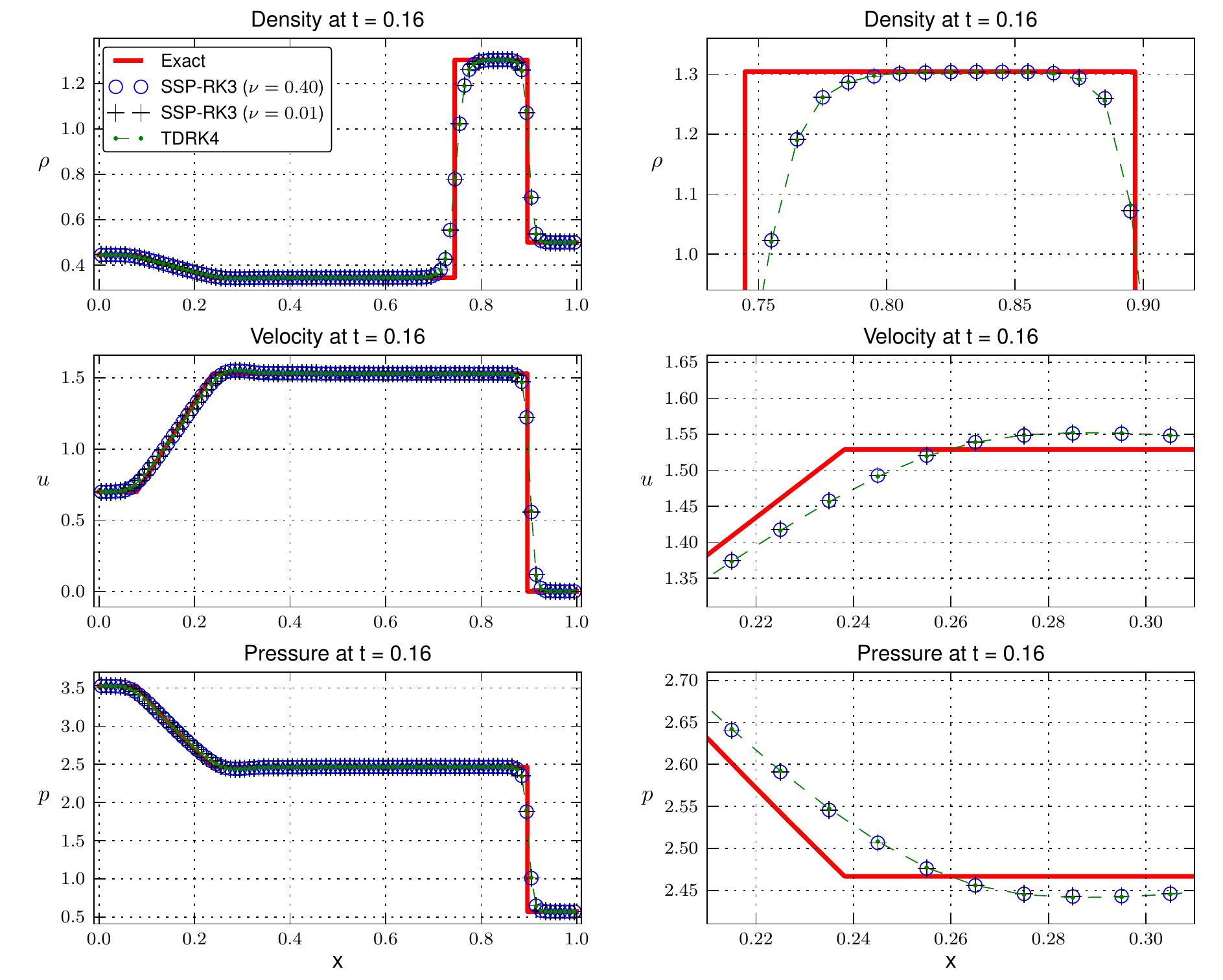}
\caption{Shock-tube Riemann problem: WENO solutions.
Shown here are WENO simulations comparing two different time integrators:
the third-order SSP method of Osher and Shu against the new two-stage, 
two-derivative method (TDRK4).
We repeat that all WENO simulations with the TDRK4 time integrator use a CFL
number of $\nu = 0.4$.
Here, we observe that the third-order SSP method with CFL numbers of 
$\nu = 0.4$ and $\nu = 0.01$ behaves qualitatively the same as the TDRK4 method.
We plot observable quantities from top to bottom: 
density $\rho$, 
velocity $u$,
and 
pressure $p$.
Left columns are the full solution, and right columns are zoomed in parts
of the same data points.
The resolution for this picture is the coarse solution of $m_x = 100$
identical to the coarse solution used in 
Figure \ref{fig:euler-shock-tube-harten-weno}.
Again, we remark that the accepted time integrator is in close agreement 
with the proposed method.
}
\label{fig:euler-shock-tube-harten-compare-weno}
\end{figure}

\subsubsection{Euler equations: shock entropy}
\label{subsubsec:shock-entropy}

Our final test case is another problem that is popular in the 
literature \cite{ShuOsher89}.  The initial conditions are
\begin{align*}
   (\rho, u, p) =& \left( 3.857143, 2.629369, 10.3333 \right), \quad &x < -4, \\
   (\rho, u, p) =& \left( 1 + \epsilon \sin(5x), 0, 1 \right), \quad &x \geq -4,
\end{align*}
with a  computational domain of $[-5,5]$.  
The final time for this simulation is $t=1.8$.
With $\eps = 0$, this is a pure Mach 3 shock moving to the right.
We follow the common practice of setting $\eps = 0.2$.

Results for WENO simulations are presented in Figure \ref{fig:euler-shock-entropy-weno}, and DG results are presented
in Figure \ref{fig:euler-shock-entropy-dg}.
For a reference solution, we plot a WENO simulation that uses
the SSP-RK3 method described in Gottlieb and Shu \cite{GoShu98},
with $m_x = 6000$ points and a small CFL number of 
$\nu = 0.1$.

\begin{figure}[!htbp]
\centering
\includegraphics[width=\textwidth]{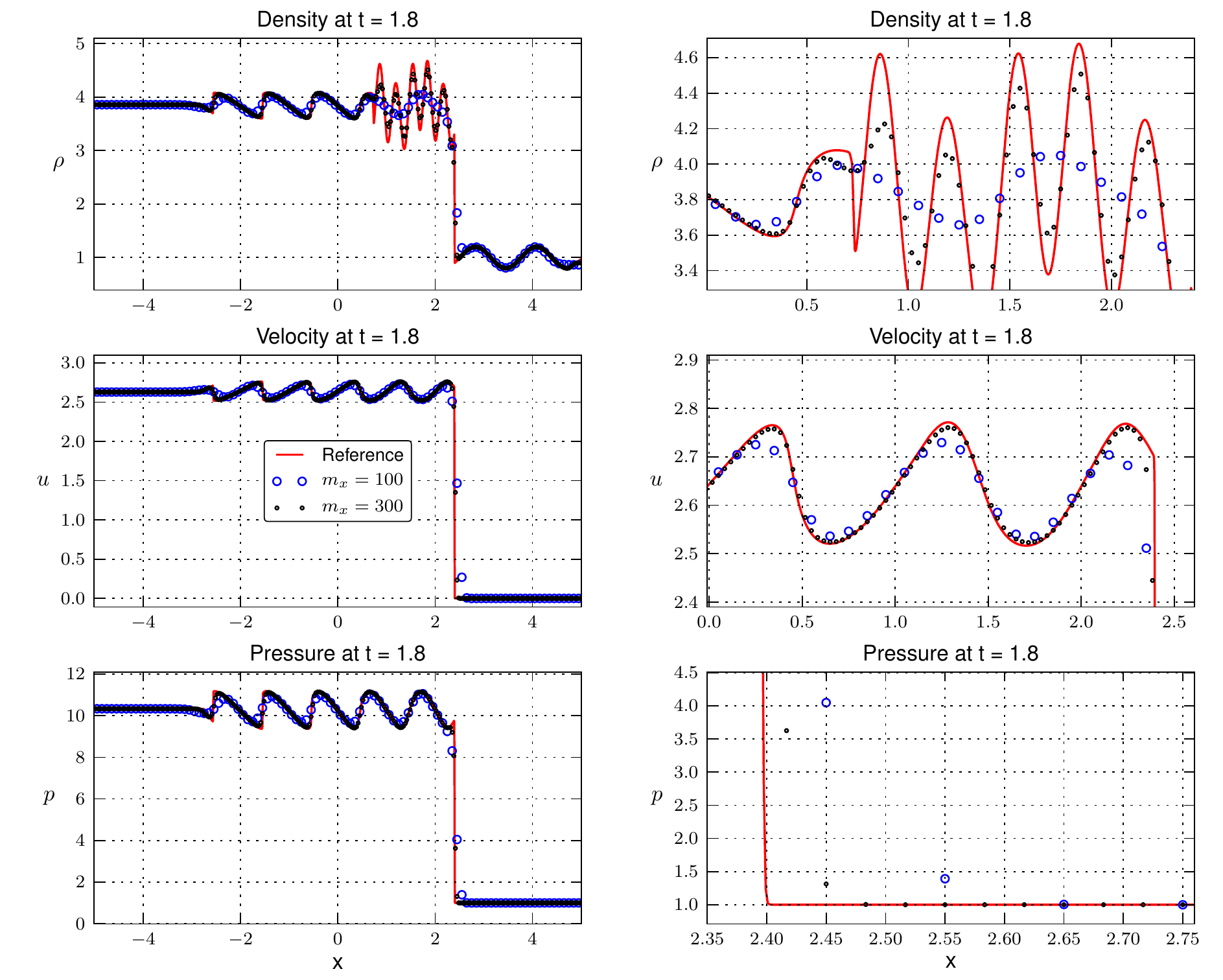}
\caption{
Shock-entropy: WENO solutions.
Shown here are WENO solutions to the shock-entropy interaction problem presented
in \S\ref{subsubsec:shock-entropy}.
We plot observable quantities from top to bottom: 
density $\rho$, 
velocity $u$,
and 
pressure $p$.
Left columns are the full solution, and right columns are zoomed in parts
of the same data points.
We present a coarse solution with $m_x = 100$ points and a finer solution
of $m_x = 300$ points.
}
\label{fig:euler-shock-entropy-weno}
\end{figure}

\begin{figure}[!htbp]
\begin{center}
\includegraphics[width=\textwidth]{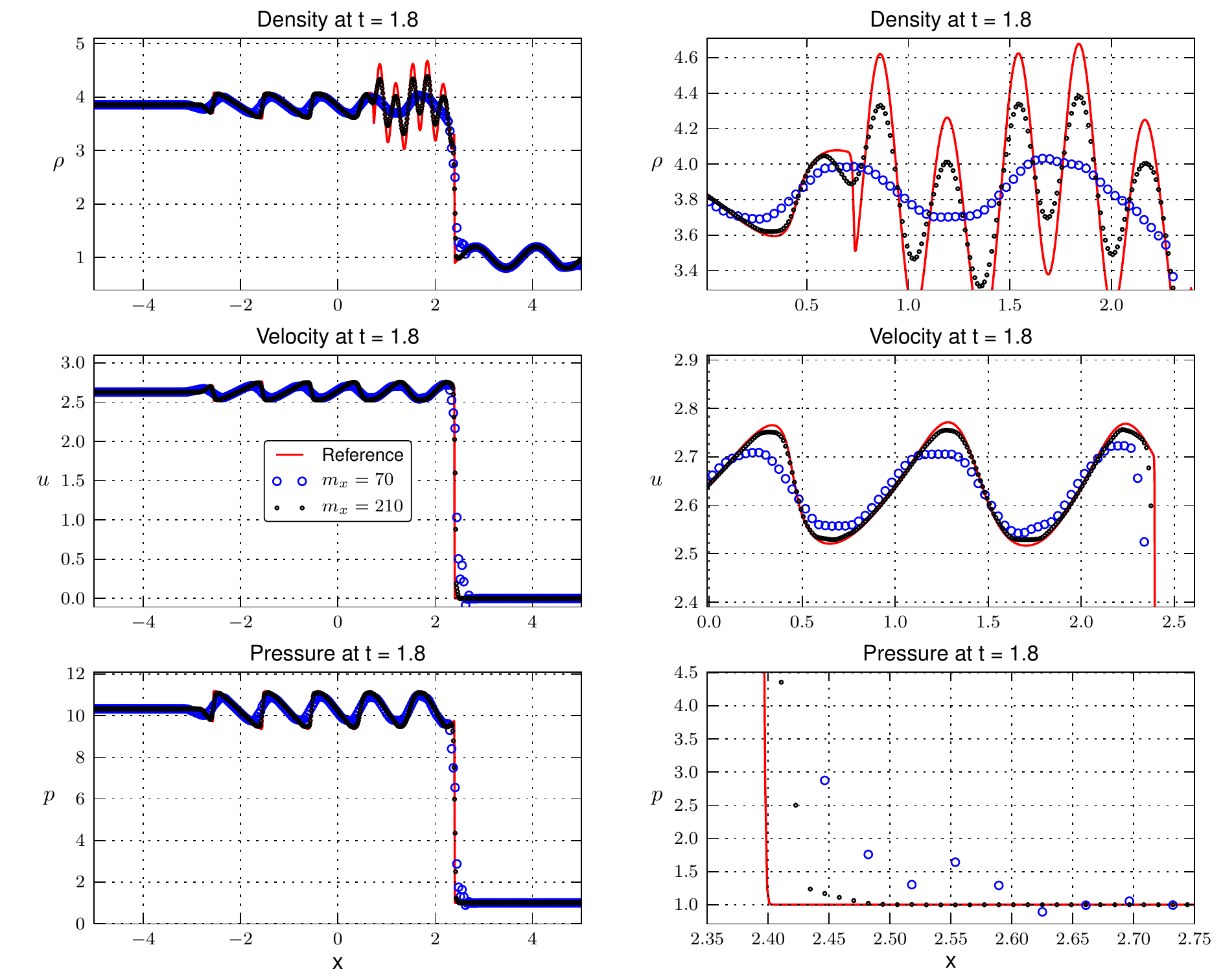}
\caption{Shock entropy.
Shown here are DG solutions to the shock-entropy interaction problem presented
in \S\ref{subsubsec:shock-entropy}.
We use a coarse mesh of $m_x = 70$ and a finer mesh of $m_x = 210$.
The reference solution is identical to the one used in
Figure 
\ref{fig:euler-shock-entropy-weno}
that is a WENO simulation with $6000$ points, and CFL number of $\nu = 0.1$
with the third-order SSP-RK3 time integrator.
Again, the DG schemes tend to be more diffusive on this problem when
compared to the WENO method, and the limiter used in this work tends to
clip the peaks for the coarse resolutions.
We repeat that we are plotting exactly four points per cell, and therefore,
the observable oscillations for the coarse resolution
are happening on a sub-cell level.
Although not shown, plots of cell averages only do not
visibly demonstrate these oscillations.
Results for other time integrators are found to be comparable.
\label{fig:euler-shock-entropy-dg}
}
\end{center}
\end{figure}

\section{Conclusions and future work}
\label{sec:conclusions}

We have presented a family of methods that generalize popular high-order
time integration methods for hyperbolic conservation laws.
The explicit multistage multiderivative (multiderivative Runge-Kutta) 
time integrators that are the subject
of this work provide the ability to
access to higher temporal derivatives for
an explicit (single-derivative) Runge-Kutta method, and they introduce
degrees of freedom by adding stages to 
high-order (single-stage) Taylor methods.
Numerous numerical examples were presented that included multiderivative schemes
for high-order discontinuous Galerkin
and WENO methods.
In order to implement the multiderivative technology, we 
leveraged recent work on Lax-Wendroff type time integrators
for the two aforementioned spatial discretizations investigated; each method
required a very different procedure for defining the higher derivatives.
Numerical results for the new multiderivative schemes
are promising: they are demonstrably comparable
to those obtained from popular high-order SSP integrators, 
they introduce greater portability to high-order Lax-Wendroff methods,
and they decrease the memory footprint for Runge-Kutta methods by introducing
higher time derivatives.


Future work will focus on extensions to higher dimensions,
as well as a mathematical exploration into the numerical properties of 
multiderivative schemes for PDEs.
In addition, an investigation into the optimization of these methods for modern
computer architectures such as graphics processing units (GPUs) should be conducted.
This will include implementation and timing comparison tests of these methods 
on GPUs.
We would like to explore developing multiderivative methods with
SSP properties, as well as low-storage `many'-stage variations of the 
two-derivative method presented in this work.
Additionally, we would like to investigate implicit and explicit multistage 
multiderivative methods for solving parabolic partial differential equations. \\




\noindent
\begin{acknowledgements}
This work has been supported in part by 
Air Force Office of Scientific Research 
grants 
FA9550-11-1-0281,  
FA9550-12-1-0343   
and
FA9550-12-1-0455,  
and by National Science Foundation grant number 
DMS-1115709.       
We would like to thank
Matthew F. Causley for discussing multiderivative methods with us, and Qi Tang for
useful discussions on the WENO method.
\end{acknowledgements}


\bibliographystyle{spmpsci}      

\bibliography{LxW-truncated}

\end{document}